\numberwithin{equation}{section}
\def\@cite#1#2{{\m@th\upshape\bfseries%
[{#1\if@tempswa{\m@th\upshape\mdseries, #2}\fi}]}}
\theoremstyle{plain}
\newtheorem{theorem}{Theorem}[section]
\newtheorem{proposition}[theorem]{Proposition}
\theoremstyle{definition}
\newtheorem{definition}[theorem]{Definition}
\newtheorem{example}[theorem]{Example}
\newtheorem{remark}[theorem]{Remark}
\theoremstyle{remark}
  \newcommand{\B}{{\mathcal{B}}}
  \newcommand{\N}{{\mathcal{N}}}
\renewcommand{\O}{{\mathcal{O}}}
\renewcommand{\S}{{\mathcal{S}}}
  \newcommand{\T}{{\mathcal{T}}}
\def\al{\alpha}
\def\be{\beta}
\def\ga{\gamma}
\def\de{\delta}
\def\la{\lambda}
\def\La{\Lambda}
\def\si{\sigma}
\newcommand\vphi{\varphi}
\newcommand{\bC}{\mathbb{C}}
\newcommand{\bN}{\mathbb{N}}
\newcommand{\bT}{\mathbb{T}}
\newcommand{\bZ}{\mathbb{Z}}
\newcommand{\bR}{\mathbb{R}}
\newcommand{\fJ}{{\mathfrak{J}}}
\newcommand{\fS}{{\mathfrak{S}}}
\newcommand{\Bi}{{\mathbf{i}}}
\newcommand{\FOR}{\text{ for }}
\newcommand{\foral}{\text{ for all }}
\newcommand{\qand}{\quad\text{and}\quad}
\newcommand{\qif}{\quad\text{if}\quad}
\newcommand{\qiff}{\quad\text{if and only if}\quad}
\newcommand{\qfor}{\quad\text{for}\; }
\newcommand{\ca}{\mathrm{C}^*}
\newcommand{\wt}{\widetilde}
\newcommand{\Aut}{\operatorname{Aut}}
\newcommand{\Avt}{\operatorname{AVT}}
\newcommand{\Eq}{\operatorname{E}}
\newcommand{\fty}{\operatorname{fin}}
\newcommand{\GEq}{\operatorname{G-E}}
\newcommand{\mt}{\emptyset}
\newcommand{\supp}{\operatorname{supp}}
\newcommand{\Tr}{\operatorname{T}}
\newcommand{\sca}[1]{\langle#1\rangle} %\sca{a,b} =<a,b>
\newcommand{\nor}[1]{\Vert #1\Vert} %\nor{x}=||x||
\newcommand{\bo}[1]{\mathbf{#1}} %bold math symbols
\newcommand{\un}[1]{{\underline{#1}}} %underline math symbols
\newcommand{\umu}{\underline\mu}
\begin{document}
%%%%%%%%%%%%%%%%%%%%%%%%%%%%%%%%

%%%%%%%%%%%%%%%%%%%%%%%%%%%%%%%%
\title[Equilibrium states and entropy theory for higher-rank graphs]{Applications of entropy of product systems: higher-rank graphs}

\author[E.T.A. Kakariadis]{Evgenios T.A. Kakariadis}
\address{School of Mathematics, Statistics and Physics\\ Newcastle University\\ Newcastle upon Tyne\\ NE1 7RU\\ UK}
\email{evgenios.kakariadis@ncl.ac.uk}

\thanks{2010 {\it  Mathematics Subject Classification.} 46L30, 46L55, 46L08, 58B34.}

\thanks{{\it Key words and phrases:} higher-rank graphs and their C*-algebras, equilibrium states, entropy.}

%%%%%%%%%%%%%%%%%%%%%%%%%%%%%%%%
\begin{abstract}
We consider C*-algebras of finite higher-rank graphs along with their rotational action.
We show how the entropy theory of product systems with finite frames applies to identify the phase transitions of the dynamics.
We compute the positive inverse temperatures where symmetry breaks, and in particular we identify the subharmonic parts of the gauge-invariant equilibrium states.
Our analysis applies to positively weighted rotational actions through a recalibration of the entropies.
\end{abstract}

\maketitle

%%%%%%%%%%%%%%%%%%%%%%%%%%%%%%%%
\section{Introduction}
%%%%%%%%%%%%%%%%%%%%%%%%%%%%%%%%

In analogy to ideal gases, an action of $\bR$ on a C*-algebra induces a notion of equilibrium states that satisfy a KMS-condition at an inverse temperature $\be > 0$ \cite{BR97}.
Phase transitions of the KMS-simplex then provide one-dimensional measurements that can be used as obstructions for $\bR$-equivariant $*$-isomorphisms.
In the past two decades there has been an excessive amount of research for computing those and connecting them to the geometry of the structure the dynamics quantize.
Two important values $\be_c' \leq \be_c$ have been detected in this respect.
When $\be < \be_c'$ then the KMS${}_\be$-simplex is empty, and when $\be > \be_c$ then the KMS${}_\be$-simplex is induced by an ideal of (generalized) compacts.
The latter are said to be of finite type.
Thinking of the compacts as a representation of the classical world, the critical value $\be_c$ can be seen as an analogue of the critical mass for the double-slit experiment.

The work of Laca-Neshveyev \cite{LN04} and Laca-Raeburn \cite{LR10} has been central in this respect.
They have provided general tools for understanding the KMS-simplex of C*-algebras that are generated through a Fock-space construction.
In this paper we consider the Toeplitz-Cuntz-Krieger algebra $\N\T(\La)$ of a higher-rank graph $\La$, and its Cuntz-Krieger quotient $\N\O(\La)$.
In a series of papers an Huef-Laca-Raeburn-Sims \cite{HLRS14, HLRS15} have studied strongly connected higher-rank graphs and provided a connection of a KMS-state with its restriction to the diagonal.
Their work exhibits a link between phase transitions and the C*-structure: in \cite{HLRS15} it is shown that $\N\O(\La)$ admits a unique KMS-state if and only if it is simple, answering a question of Yang \cite{Yan17}.
Moving to more general cases an Huef-Kang-Raeburn \cite{HKR17} and Fletcher-an Huef-Raeburn \cite{FHR18} produced an algorithm to compute the KMS-simplices at $\be_c$ (and then reducing to other phase transitions) as long as the higher-rank graph admits a connectedness between irreducible components of different directions.
Christensen \cite{Chr18} removed all these conditions and remarkably tackled the general case.
He proved that every KMS${}_\be$-state admits a unique convex decomposition in subharmonic parts, one for each subset of directions.

Around the same time, the author \cite{Kak18} independently showed that this holds for a wider family of C*-algebras coming from product systems.
Building on \cite{Chr18, FHR18, HKR17, HLRS14, HLRS15, LN04} and under the existence of finite frames, we produced a parametrization of the gauge-invariant subharmonic simplices from principal information of the diagonal, rather than from their restriction to the diagonal.
For finite higher-rank graphs this produces a weak*-continuous bijection with appropriate unimodular vectors.
Afsar-Larsen-Neshveyev \cite{ALN18} extended the parametrization of the finite-type simplex for rather general product systems (over other quasi-lattices than $\bZ_+^N$). 

The tool introduced in \cite{Kak17,Kak18} uses several notions of entropy on finite frames.
As a consequence $\be_c'$ is completely identified as the nonnegative greatest lower bound of the tracial entropies, while $\be_c$ can be at most equal to the strong entropy of the finite frames.
This naturally leads to asking the following questions:
\begin{enumerate}
\item Is the strong entropy a phase transition for the dynamics on product systems?
\item How does the gauge-invariant KMS${}_\be$-simplex change while $\be$ varies?
\end{enumerate}

In the current paper we answer these questions to complete the study of phase transitions for finite higher-rank graphs initiated by an Huef-Laca-Raeburn-Sims \cite{HLRS14, HLRS15} and then examined by an Huef-Kang-Raeburn \cite{HKR17}, Fletcher-an Huef-Raeburn \cite{FHR18} and Christensen \cite{Chr18}; the rank-1 case is studied in \cite{HLRS15b} and \cite{Kak17}.
The main feature is that $\La$ is a set of commuting matrices with prescribed commutation relations between paths of different colours; we write
\begin{equation}
\La =(\La^{(1)}, \dots, \La^{(N)}; \sim)
\end{equation}
in this respect.
First of all we settle that $\be_c$ for $\N\T(\La)$ is exactly the entropy of the higher-rank graph, i.e.,
\begin{equation}
\be_c = \log \rho(\La) := \max \big\{ \log \rho(\La^{(1)}), \dots, \log \rho(\La^{(N)}) \big\}, 
\end{equation}
where $\rho(\La^{(i)})$ is the Perron-Frobenius eigenvalue of $\La^{(i)}$.
In Theorem \ref{T:main gi} we show that nonnegative phase transitions occur exactly at $\log \la$ for every 
\begin{equation}
1 \leq \la \in \fS := \{\rho(H) \mid \textup{ $H$ is a sink subgraph of $\La$} \},
\end{equation}
in the sense that for every such $\la$ there exists an $F \subsetneq \{1, \dots, N\}$ so that the $F$-subharmonic part $\Eq_{\log \la}^F(\N\T(\La))$ is non-empty, and that if $\be \notin \{\log \la \mid 1 \leq \la \in \fS\}$ then the only part that survives is the finite part, i.e., 
\begin{equation}
\Eq_\be(\N\T(\La)) = \Eq_\be^{\fty}(\N\T(\La)) \text{ for } \be \notin \{\log \la \mid 1 \leq \la \in \fS\}.
\end{equation}
In particular the strong entropy $\be_c$ is indeed a phase transition (the maximal one). 
We also describe the gauge-invariant KMS-simplex at $\log \la$ for $\la \in \fS$ by a weak*-homeomorphism with specific tracial states of the C*-algebra $A \subseteq \N\T(\La)$ generated by the vertices, namely
\begin{equation}
\GEq_{\log \la}^F(\N\T(\La)) \simeq \Tr_{\log \la}^F(A) \textup{ when } 1 \leq \la \in \fS,
\end{equation}
where a characterization of $\Tr_{\log \la}^F(A)$ is given in Proposition \ref{P:F chara}.
For $F = \mt$ this amounts to classifying the KMS-states by the common eigenvectors at $\la$.
On the other hand if $\la < \la'$ are successive in $\fS$ then
\begin{equation}
\Eq_\be^{\fty}(\N\T(\La)) = \GEq_\be^{\fty}(\N\T(\La)) \simeq \sca{\de_v \mid \rho\left(\overleftarrow{\La}(v)\right) \leq \la} \neq \mt
\text{ when }
0 < \be \in (\log \la, \log \la'].
\end{equation}
Here $\overleftarrow{\La}(v)$ denotes the higher-rank subgraph consisting of the vertices with which the vertex $v$ (forwards) communicates, and all edges connecting them.
We conclude that the parametrizations are weak*-homeomorphisms.

It is worth mentioning that the connection we obtain here is between the KMS-states of $\N\T(\La)$ and geometric information on the vertices of the graph, rather than with their restriction on the subalgebra of the vertices $A$.
In Remark \ref{R:Chr} we derive a form of the KMS-states similar to the one obtained by Christensen \cite{Chr18}.

In Remark \ref{R:descends} we show how the weak*-homeomorphisms descend to $\N\O(\La)$.
For the finite part we have that
\begin{equation}
\GEq_\be^{\fty}(\N\O(\La)) \simeq \sca{v \in V \mid v \textup{ is a source}, \rho\left(\overleftarrow{\La}(v)\right) \leq \la}
\text{ when }
\be \in (\log \la, \log \la'].
\end{equation}
For the non-finite parts at $\log \la > 0$ with $1 \leq \la \in \fS$ we have that if $\mt \neq F \neq \{1, \dots, N\}$ then
\begin{equation}
\GEq_{\log \la}^F(\N\O(\La)) \simeq \big\{\tau \in \Tr_{\log \la}^F(A) \mid \supp \tau \subseteq \{v \mid \text{ $v$ is not $F$-tracing}\} \big\}
\end{equation}
while for $F = \mt$ the infinite-type part descends as is to $\N\O(\La)$.
Here a vertex $v$ is not $F$-tracing if either it is an $F$-source itself or it is $F^c$-communicated by an $F$-source.

As seen from the examples it may be the case that $\N\O(\La)$ does not admit any KMS-state for the original action.
However commuting nonnegative matrices have a common nonnegative eigenvector at possibly different eigenvalues $\la_i$.
One may thus normalize the action by weights $s_i := \la_i$ (as long as they are non-zero).
Then $\N\O(\La)$ admits at least one KMS-state of infinite type at the normalized $\be = 1$.
Of course one is free to normalize along different weights $(s_1, \dots, s_N)$.
The theory herein applies to these cases simply by recalibrating the entropies by a factor of $s_i$.

In Section 2 we introduce notation and translate the main points of \cite{HLRS14, Chr18, Kak18} in the higher-rank context.
In Section 3 we investigate the phase transitions and provide the main conclusions.
In Section 4 we apply the theory to several examples from the literature.
In Section 5 we close with a note on weighted dynamics showing how our theory applies there as well.

%%%%%%%%%%%%%%%%%%%%%%%%%%%%%%%%
\section{Preliminaries}\label{S:pre}
%%%%%%%%%%%%%%%%%%%%%%%%%%%%%%%%

Let us begin by fixing notation for higher-rank graphs and their associated product systems. 
For more details the reader is addressed to \cite{RS03, RSY03, RSY04} (here we just consider finite graphs).

We will denote the generators of $\bZ_+^N$ by $\bo{1}, \dots, \Bi, \dots, \bo{N}$, where $\bZ_+$ denotes the set of nonnegative integers.
For $F \subseteq \{1, \dots, N\}$ we write
\[
\un{n} =(n_1, \dots, n_N) \in F \qiff \{i \mid n_i \neq 0\} \subseteq F.
\]
We say that $\un{n} \perp \un{m}$ when they are supported on disjoint directions.

Let $G= (V,E,r,s)$ be a directed graph with $r(\mu)$ (resp., $s(\mu)$) denoting the terminal (resp., initial) vertex of a path $\mu$.
It is important to note that paths are read from right to left to comply with operator multiplication.
Denote by $E^{\bullet}$ the collection of all paths in $G$ and partition the edge set
\[
E = E_1 \bigcupdot \cdots \bigcupdot E_N,
\]
such that each edge carries a unique colour from a selection of $N$ colours. 
If $n_i$ is the number of edges in $\mu$ from $E_i$, we define the \emph{multi-degree} $\ell(\mu)$ and the \emph{length} $|\mu|$ by
\[
\ell(\mu) := (n_1, \dots, n_N)
\qand
|\mu| := n_1 + \cdots + n_N.
\]
A \emph{higher rank $N$-structure} on $G$ is an equivalence relation $\sim$ on $E^{\bullet}$ such that for all $\lambda \in E^{\bullet}$ and $\un{m},\un{n} \in \bZ_+^N$ with $\ell(\lambda) = \un{m} + \un{n}$, there exist unique $\mu,\nu \in E^{\bullet}$ with $s(\lambda) = s(\nu)$ and $r(\lambda) = r(\mu)$, such that $\ell(\mu)=\un{n}$ and $\ell(\nu) = \un{m}$ and $\lambda \sim \mu \nu$. 
That is now, there is one way of going from one vertex to the other up to shuffling colour-wise.
This is also referred to as the \emph{factorization property}.
We write $\La := E^{\bullet} / \sim$ and keep denoting by $\ell$ and $|\cdot|$ the induced multi-degree maps on $\La$. 
It is usual to still denote by $\mu, \la$ etc.\ the elements of $\La$, but in order to make a distinction we will write $\umu, \un{\la}$ etc.\ for the representatives of the equivalence classes.
In this way the pair $\La$ is a \emph{higher-rank graph} as in \cite[Definition 2.1]{RSY03}. 

A higher-rank graph is \emph{finite} if the set of vertices and the set of edges are finite.
In this case $\La$ constitutes of $N$ commuting matrices with prescribed (associative) commutation relations on squares on different colours, and we write this by
\[
\La = (\La^{(1)}, \dots, \La^{(N)}; \sim).
\]
Here we adopt the convention that $\La^{(i)}_{ts}$ denotes the number of the $\{i\}$-coloured edges from $v_s$ to $v_t$.

%%%%%%%%%%%%%%%%%%%%%%%%%%%%%%%%
\begin{remark}
A higher-rank graph on two colours, say red and blue, amounts to having the same number of blue-red paths and red-blue paths for any pair of vertices.
Of course their matching each time produces a ``different'' higher-rank graph. 
However when we have three colours or more then the commutation relations must also be associative, i.e., independent of the order of the colours appearing in a path.
\end{remark}

For $\un{n} \in \bZ_+^N$ we write $\La^{\un{n}} := \{\umu \in \La \mid \ell(\umu) = \un{n}\}$.
Notice here the difference between the set $\La^{\Bi}$ of the $i$-coloured edges and the matrix $\La^{(i)}$ of the $i$-coloured subgraph.
For arbitrary $\un{n}$ we will write
\begin{equation}
\La^{(\un{n})} := \prod_{i \in \supp \un{n}} [\La^{(i)}]^{n_i}.
\end{equation}
For $\umu \in \La$ and $\S \subseteq \La$, we define
\begin{equation}
\umu \S := \{\umu \, \un{\nu} \in \La \mid \un{\nu} \in \S \} \qand \S \umu := \{\un{\nu} \, \umu \in \La \mid \un{\nu} \in \S \}.
\end{equation}
We say that a vertex $v$ is an \emph{$F$-source} for some $F \subseteq \{1, \dots, N\}$ if it does not receive any path $\umu \in \La$ with $\ell(\umu) \in F$.
Equivalently, if $v \La^{\Bi} = \mt$ for all $i \in F$, due to the unique factorization property.
We say that a vertex $v$ is an \emph{eventual $F$-source} if there exists a $k_0 \in \bN$ such that 
\[
v \La^{\un{n}} = \mt \text{ whenever $\un{n} \in F$ with $k_0 \sum_{i \in F} \Bi \leq \un{n}$.}
\]
Equivalently, $D_{v v'} = 0$ for all $v' \in V$ for the matrix
\[
D : = [\prod_{i \in F} \La^{(i)}]^{k_0} = \prod_{i \in F} (\La^{(i)})^{k_0}.
\]
When $F = \{1, \dots, N\}$ we simply say that $v$ is a \emph{source}, resp. an \emph{eventual source}.

In order to construct the Toeplitz-Cuntz-Krieger C*-algebra we need to take into account common backwards extensions of paths.
For $\un{\la}, \umu \in \La$ let
\begin{equation}
\La^{\min}(\un{\la}, \umu) := \big\{ (\un{\al}, \un{\al}') \mid \un{\la} \, \un{\al} = \umu \, \un{\al}', \ell(\un{\la} \, \un{\al}) = \ell(\un{\la}) \vee \ell(\umu) = \ell(\umu \, \un{\al}') \big\}
\end{equation}
be the set of \emph{minimal common extensions of $\un{\la}$ and $\umu$}.
A higher-rank graph is called \emph{finitely aligned} if $|\La^{\min}(\un{\la}, \umu)| < \infty$ for all $\un{\la}, \umu \in \La$.
Finite graphs are finitely aligned.
A set of partial isometries $\{T_{\umu}\}_{\umu \in \La}$ is called a \emph{Toeplitz-Cuntz-Krieger $\La$-family} for a finitely aligned $\La$, if it satisfies the following three conditions: \vspace{3pt}
\begin{enumerate}
\item[(P)]
$\{T_v\}_{v\in \La^{\un{0}}}$ is a collection of pairwise orthogonal projections; \vspace{3pt}
\item[(HR)]
$T_{\un{\la}} T_{\umu} = \de_{s(\un{\la}), r(\umu)} T_{\un{\la} \, \umu}$ for all $\un{\la}, \umu \in \La$; and \vspace{3pt}
\item[(NC)]
$T_{\un{\la}}^* T_{\umu} = \sum \big\{ T_{\un{\al}} T_{\un{\al}'}^* \mid (\un{\al}, \un{\al}')\in \La^{\min}(\un{\la}, \umu) \big\}$ for all $\un{\la}, \umu \in \La$. \vspace{3pt}
\end{enumerate}
Note that (NC) suggests that $T_{\un{\la}}^* T_{\un{\mu}} = \de_{\un{\la}, \un{\mu}} T_{s(\un{\la})}$ whenever $\ell(\un{\la}) = \ell(\un{\mu})$.
We write $\N\T(\La)$ for the universal C*-algebra of the Toeplitz-Cuntz-Krieger $\La$-families for a finitely aligned graph.

In \cite{DK18} the author with Dor-On provided an alternative way to visualize the Cuntz-Krieger quotient of $\N\T(\La)$ when $\La$ is \emph{strong finitely aligned}, i.e., when in addition
\[
|\{\La^{\min}(\un{\la}, \un{e}) \mid \ell(\un{e}) = \Bi \perp \ell(\un{\la})\}| <\infty \foral \un{\la} \in \La \text{ and } \Bi \perp \ell(\un{\la}).
\]
Finite graphs are automatically strong finitely aligned.
Let $\mt \neq F \subseteq \{1, \dots, N\}$ be a set of directions.
A vertex $v \in \La^{\un{0}}$ is called \emph{$F$-tracing} if: \vspace{2pt}
\begin{enumerate}
\item for every $\umu \in r^{-1}(v) \cap \ell^{-1}(F^c)$ there is an $i \in F$ such that $s(\umu)\La^{\bo{i}} \neq \emptyset$; and \vspace{2pt}
\item $|s(\umu)\La^{\bo{i}}| < \infty$ for all $i \in \{1, \dots, N\}$. \vspace{2pt}
\end{enumerate}
The second condition is redundant when $\La$ is finite.
In \cite{DK18} it is shown that a Toeplitz-Cuntz-Krieger $\La$-family of a strong finitely aligned higher-rank graph is a \emph{Cuntz-Krieger $\La$-family} in the sense of Raeburn-Sims-Yeend \cite{RSY03} if for every $\mt \neq F \subseteq \{1, \dots, N\}$ it satisfies: \vspace{3pt}
\begin{enumerate}
\item[(CK)]
$T_v \prod \{ I - T_{\un{e}}T_{\un{e}}^* \mid \un{e} \in \La^{\Bi}, i \in F\} = 0$ for every $F$-tracing vertex $v$. \vspace{3pt}
\end{enumerate}
We write $\N\O(\La)$ for the universal C*-algebra of the Cuntz-Krieger $\La$-families for a strong finitely aligned graph.

The Toeplitz-Cuntz-Krieger algebra admits a gauge action of the $N$-torus.
This in turn defines a rotational action and consequently a theory of equilibrium states induced by \cite{BR97}.
An Huef-Laca-Raeburn-Sims \cite{HLRS14} initiated their study and let us provide here the fundamental definitions.
In short, let $\{\ga_{\un{z}}\}_{\un{z} \in \bT^N}$ be the gauge action on $\N\T(\La)$ such that
\[
\ga_{\un{z}}(T_{\umu}) = \un{z}^{\ell(\umu)} T_{\umu} \foral \umu \in \La,
\]
and let
\[
\si \colon \bR \to \Aut(\N\T(\La)): r \mapsto \ga_{(\exp(i r), \dots, \exp(i r))}.
\]
The monomials of the form $T_{\un{\la}} T_{\umu}^*$ span a dense $\si$-invariant $*$-subalgebra of analytic elements of $\N\T(\La)$ since the function
\[
\bR \to \N\T(\La): r \mapsto \si_r(T_{\un{\la}} T_{\umu}^*) = e^{i (|\un{\la}| - |\umu|) r} T_{\un{\la}} T_{\umu}^*
\]
is analytically extended to the entire function
\[
\bC \to \N\T(\La) : z \mapsto e^{i (|\un{\la}| - |\umu|) z} T_{\un{\la}} T_{\umu}^*.
\]
We say that a state $\vphi$ satisfies the \emph{$(\si,\be)$-KMS condition} at $\be > 0$ if
\begin{equation*}\label{eq:kms}
\vphi(T_{\un{\la}} T_{\umu}^* \cdot T_{\un{\la}'} T_{\umu'}^*)
=
e^{-(|\un{\la}| - |\umu|) \be} \vphi(T_{\un{\la}'} T_{\umu'}^* \cdot T_{\un{\la}} T_{\umu}^*)
\foral
\un{\la}, \umu, \un{\la}', \umu' \in \La.
\end{equation*}
We write $\Eq_\be(\N\T(\La))$ for the simplex of the KMS-states at $\be >0$.
If $E \colon \N \T(\La) \to \N\T(\La)^\ga$ is the conditional expectation induced by $\{\ga_{\un{z}}\}_{\un{z} \in \bT^N}$, then we write
\[
\GEq_\be(\N\T(\La)) := \{ \vphi \in \Eq_\be(\N\T(\La)) \mid \vphi = \vphi E\}
\]
for the sub-simplex of the gauge-invariant equilibrium states.

The equilibrium states of higher-rank graphs have been under thorough examination in the past years by an Huef-Laca-Raeburn-Sims \cite{HLRS14, HLRS15}, an Huef-Kang-Raeburn \cite{HKR17} and Fletcher-an Huef-Raeburn \cite{FHR18} under some conditions on the graph.
Lately Christensen \cite{Chr18} removed all conditions and provided a description of the simplex at inverse temperature $\be >0$.
Finite higher-rank graphs form product systems with finite frames that were considered by the author \cite{Kak18}.
Let us summarize here the main points of \cite{Kak18} that will help with the analysis of the phase transitions.
To this end, let 
\begin{equation*}
\N\T(\La) = \ca(T_{\umu} \mid \umu \in \La)
\end{equation*}
for a finite higher-rank graph $\La$, and define the projections
\begin{equation*}
1 - Q_{\Bi} = P_\Bi := \sum_{\ell(e) = \Bi} T_{e} T_{e}^*, \quad
Q_F := \prod_{i \in F} (1 - P_\Bi) \qand
Q_F^{\un{n}} := \sum_{\ell(\umu) = \un{n}} T_{\umu} Q_F T_{\umu}^*,
\FOR
\un{n} \in F.
\end{equation*}
For every $F \subseteq \{1, \dots, N\}$ we define the \emph{$F$-subharmonic simplex}
\begin{equation}
\Eq_\be^F(\N\T(\La)) := \{\vphi \in \Eq_\be(\N\T(\La)) \mid \sum_{\un{n} \in F} \vphi(Q_{F}^{\un{n}}) = 1 \text{ and } \vphi(Q_\Bi) = 0 \foral i \notin F\},
\end{equation}
with the understanding that, for $F = \{1, \dots, N\}$ we have the \emph{finite-type simplex}
\begin{equation}
\Eq_\be^{\fty}(\N\T(\La)) 
\equiv
\Eq_\be^{\{1, \dots, N\}}(\N\T(\La))
:= \{\vphi \in \Eq_\be(\N\T(\La)) \mid \sum_{\un{n} \in \bZ_+^N} \vphi(Q^{\un{n}}_{\{1, \dots, N\}}) = 1 \},
\end{equation}
and for $F = \mt$ we have the \emph{infinite-type simplex}
\begin{equation}
\Eq_\be^\infty(\N\T(\La)) 
\equiv
\Eq_\be^{\mt}(\N\T(\La))
:= 
\{\vphi \in \Eq_\be(\N\T(\La)) \mid \vphi(Q_\Bi) = 0 \foral i =1, \dots, N \}.
\end{equation}
By construction every finite $(\si,\be)$-KMS state is automatically gauge-invariant.
Then \cite[Theorem A]{Kak18} asserts that for every $\vphi \in \Eq_\be(\N\T(\La))$ there are $\vphi_F \in \Eq_\be^F(\N\T(\La))$ and $\la_F \in [0,1]$ with $\sum_F \la_F = 1$ such that $\vphi = \sum_F \la_F \vphi_F$.
It must be noted that, in the higher-rank graph context, this decomposition has been first established by Christensen \cite{Chr18}.

In \cite[Theorem B]{Kak18} we established a correspondence between each subharmonic part and a simplex of states on the diagonal
\begin{equation}
A := \ca(T_v \mid v \in V) \simeq \bC^{|V|}.
\end{equation}
For every $\mt \neq F \subseteq \{1, \dots, N\}$ and $\tau \in \Tr(A)$ a (tracial) state on $A$ let
\begin{equation}\label{eq:c}
c_{\tau, \be}^F 
:= 
\sum \big\{ e^{- |\umu| \be} \tau(T_{\umu}^* T_{\umu}) \mid \ell(\umu) \in F \big\}
=
\sum \big\{ e^{- |\umu| \be} \tau(T_{s(\umu)}) \mid \ell(\umu) \in F \big\}.
\end{equation}
Then the associated $F$-set is given by
\begin{equation}\label{eq:tr}
\Tr_{\be}^F(A)
:=
\{ \tau \in \Tr(A) \mid c_{\tau,\be}^F < \infty
\text{ and }
\La^{(i)} \tau = e^{\be}\tau \foral i \notin F\},
\end{equation}
where we make the association of a trace on $A$ with an $\ell^1$-unimodular nonnegative vector on $|V|$-dimensions.
In particular for $F = \{1, \dots, N\}$ we write
\begin{equation}
\Tr_\be^{\fty}(A) := \big\{ \tau \in \Tr(A) \mid c_{\tau, \be}^{\{1, \dots, N\}} = \sum \{ e^{- |\umu| \be} \tau(T_{s(\umu)}) \mid \umu \in \La \} < \infty \big\}.
\end{equation}
For $F = \mt$ we define 
\begin{equation}
\Avt_\be(A) := \{\tau \in \Tr(A) \mid \La^{(i)} \tau = e^{\be}\tau \foral i = 1, \dots, N\}.
\end{equation}
For the general theory of \cite{Kak18}, we need to further restrict to traces that annihilate the ideal
\[
\fJ_{F^c} := \ker\big\{ A \hookrightarrow \N\T(\La) \rightarrow \N\T(\La)/\sca{Q_{\Bi} \mid i \notin F} \big\}.
\]
By construction we have that $T_v \in \fJ_{F^c}$ if and only if there exists a $k_0 \in \bN$ such that 
\[
T_v T_{\umu} = 0 \textup{ whenever $\ell(\umu) \in F^c$ with $k_0 \sum_{i \notin F} \Bi \leq \ell(\umu)$}.
\]
Equivalently, if and only if $v$ is eventually an $F^c$-source, and so
\[
\fJ_{F^c} = \ca(T_v \mid \textup{$v$ is eventually an $F^c$-source}).
\]
Then \cite[Theorem B]{Kak18} asserts that there is a weak*-homeomorphism
\[
\Psi \colon \GEq_\be^{\infty}(\N\T(\La)) \to \{\tau \in \Avt_\be(A) \mid \tau(T_v) = 0\text{ when $v$ is eventually a source}\},
\]
and that there is a weak*-continuous bijection 
\[
\Phi^F \colon \GEq_\be^{F}(\N\T(\La)) \to \{ \tau \in \Tr_\be^F(A) \mid  \tau(T_v) = 0 \textup{ when $v$ is eventually an $F^c$-source}\}.
\]
These parametrizations respect convex combinations and thus the extreme points of the simplices.
We will show in Theorem \ref{T:main gi} that actually every $\tau \in \Tr_\be^F(A)$ automatically annihilates every eventual $F^c$-source and that every $\Phi^F$ is a weak*-homeomorphism.

In Theorem \ref{T:main gi} we will use the form of the parametrizations $\Psi$ and $\Phi^F$ and a note is in place about how they are constructed.
If $\vphi \in \GEq_\be^{\infty}(\N\T(\La))$ then we define
\[
\Psi(\vphi)(T_v) = \vphi(T_v) \foral v \in V,
\]
and if $\vphi \in \GEq_\be^F(\N\T(\La))$ for $F \neq \mt$ then we define
\[
\Phi^F(\vphi)(T_v) = \vphi(Q_F)^{-1} \vphi(Q_F T_v Q_F) \foral v \in V.
\]
Conversely, if $\tau \in \Tr_\be^F(A)$ annihilates the eventual $F^c$-sources then it can be extended (homeomorphically) to a KMS-state $\wt\tau$ on the subalgebra generated by $\{\La^{(i)}\}_{i \notin F}$, in the sense that
\begin{align*}
\wt{\tau}(T_{\un{\la}} T_{\un{\la}'}^*)
& =
\begin{cases}
e^{-|\un{\la}| \be} \sum_{\ell(\umu) = \un{n}} \tau(\sca{T_{\umu}^* T_{\un{\la}} T_{\un{\la}'}^* T_{\umu}})
& \text{if }
\ell(\un{\la}) = \ell(\un{\la}') = \un{n} \perp F,\\
0 & \text{if } \ell(\un{\la}), \ell(\un{\la}') \perp F, \ell(\un{\la}) \neq \ell(\un{\la}'),
\end{cases} \\
& =
\de_{\un{\la}, \un{\la}'} e^{-|\un{\la}| \be} \tau(T_{s(\un{\la})})
\end{align*}
when $\ell(\un{\la}), \ell(\un{\la}') \perp F$.
When $F = \mt$ then this gives $\Psi^{-1}$.
If $F \neq \mt$ we may use $F$-supported statistical approximations on $\wt{\tau}$ and finally derive the KMS-state $\vphi_\tau \equiv (\Phi^F)^{-1}(\tau)$ in $\GEq_\be^F(\N\T(\La))$ such that for $\un{\la}, \un{\la}' \in \La$ we obtain
\begin{align*}
\vphi_\tau(T_{\un{\la}} T_{\un{\la}'}^*) 
& = 
\de_{\ell(\un{\la}), \ell(\un{\la}')} \cdot (c_{\tau, \be}^F)^{-1} \cdot \sum_{\ell(\umu) \in F} e^{-(|\un{\la}| + |\umu|) \be} \tau(T_{\umu}^* T_{\un{\la}'}^* T_{\un{\la}} T_{\umu}) \\
& =
\de_{\un{\la}, \un{\la}'} \cdot (c_{\tau, \be}^F)^{-1} \cdot e^{-|\un{\la}| \be} \cdot \sum_{\ell(\umu) \in F} e^{- |\umu| \be} \tau(T_{\umu}^* T_{s(\un{\la})} T_{\umu}),
\end{align*}
where we used that $\de_{\ell(\un{\la}), \ell(\un{\la}')} T_{\un{\la}'}^* T_{\un{\la}} = \de_{\un{\la}, \un{\la}'} T_{\un{\la}'}^* T_{\un{\la}} = \de_{\un{\la}, \un{\la}'} T_{s(\un{\la})}$.
In Remark \ref{R:Chr} we will derive the connection with the form of $\vphi_\tau$ as established by Christensen \cite{Chr18}.

This parametrization descends to $\N\O(\La)$ when the trace annihilates the $F$-tracing vertices in the sense that
\[
\GEq_{\be}^{\infty}(\N\O(\La)) \simeq \GEq_{\be}^{\infty}(\N\T(\La)),
\]
while 
\[
\GEq_{\be}^F(\N\O(\La)) \simeq \big\{\tau \in \Tr_{\be}^F(A) \mid \tau(T_v) = 0 \text{ when $v$ is $F$-tracing or an eventual $F^c$-source} \big\}
\]
for $\mt \neq F \subsetneq \{1, \dots, N\}$, and
\[
\GEq_{\be}^{\fty}(\N\O(\La)) \simeq \big\{\tau \in \Tr_{\be}^{\fty}(A) \mid \tau(T_v) = 0 \text{ when $v$ is not a source} \big\}.
\]
However these simplices may be empty now.

It is worth mentioning that these parametrizations are essentially different from what is obtained in \cite{ALN18, Chr18, FHR18, HKR17, HLRS14, HLRS15, LN04}.
The parametrizations therein link to what a KMS-state does by restriction on $A$.
In \cite{Kak17, Kak18} we construct KMS-states from geometric information on the vertices and the matrices, related to some specific series convergence.
By re-formulating the terminology of \cite{Kak18}, we define the \emph{strong $F$-entropy} of $\La$ by
\begin{equation}
h_\La^{s,F} := \limsup_k \frac{1}{k} \log \| \sum_{\ell(\umu) \in F, |\umu| = k} T_{\umu}^* T_{\umu} \|,
\end{equation}
and we write $h_\La^{s}$ when $F = \{1, \dots, N\}$.
The \emph{tracial $F$-entropy} of a $\tau \in \Tr(A)$ is given by
\begin{equation}
h_\La^{\tau, F} := \limsup_k \frac{1}{k} \log \bigg( \sum_{\ell(\umu) \in F, |\umu| = k} \tau(T_{\umu}^* T_{\umu}) \bigg),
\end{equation}
and we write $h_\La^{\tau}$ when $F = \{1, \dots, N\}$.
The root test links $h_\La^{\tau, F}$ with the constant $c_{\tau, \be}^F$ of equation (\ref{eq:c}).
In \cite[Theorem D]{Kak18} it is shown that
\[
h_\La^\tau \leq h_\La^s = \max \{ h_\La^{s, i} \mid i =1, \dots, N \},
\]
and that
\[
h_\La^{\tau, F} \leq h_\La^{\tau} \leq \be
\foral
\tau \in \Tr_\be^F(A).
\]
Moreover it is shown that there are no equilibrium states at $\be>0$ when $\be$ is less than
\[
h_\La = \max \big\{0, \inf \{h_\La^{\tau} \mid \tau \in \Tr(A)\} \big\},
\]
and $h_\La$ is optimal.
It has also been shown in \cite{Kak18} and in \cite{ALN18} that there are no non-finite parts above $h_\La^s$, however it has been left open if $h_\La^s$ is optimal in this respect.

%%%%%%%%%%%%%%%%%%%%%%%%%%%%%%%%
\section{Phase transitions}
%%%%%%%%%%%%%%%%%%%%%%%%%%%%%%%%

Up to a permutation of the vertices, every $\La^{(i)}$ can be written in a lower triangular form
\[
\La^{(i)}
=
\begin{bmatrix}
\La^{(i)}_{1} & 0 & \cdots & 0 \\
\ast & \La^{(i)}_{2} & \cdots & 0 \\
\vdots & \vdots & \ddots & \vdots \\
\ast & \ast & \cdots & \La^{(i)}_{m_i}
\end{bmatrix},
\]
where the $\La^{(i)}_1, \dots, \La^{(i)}_{m_i}$ are irreducible components (including the possibility that they are equal to $[0]$); see for example \cite[Equation (4-4-1)]{LM95}.
Then the Perron-Frobenius eigenvalue of $\La^{(i)}$, i.e., the maximum positive eigenvalue, is given by
\[
\rho(\La^{(i)}) = \max \{\rho(\La^{(i)}_1), \dots, \rho(\La^{(i)}_{m_i}) \}.
\]
By the Perron-Frobenius Theorem we have two cases: if $\La^{(i)}_1 = \cdots = \La^{(i)}_{m_i} = [0]$ then $\rho(\La^{(i)}) = 0$; otherwise $\rho(\La^{(i)}) \geq 1$.
Moreover the graph-entropy of $\La^{(i)}$ equals $\log \rho(\La^{(i)})$.
Of course it may not happen that the same permutation works for all colours, but we can always find a permutation that makes all $\La^{(i)}$ lower triangular, simultaneously.
To this end paint back all edges by one-colour and consider the graph with adjacency matrix
\[
G:= \La^{(1)} + \cdots + \La^{(N)}.
\]
We can have a permutation of vertices to achieve a block lower triangular form for $G$ as
\[
G
=
\begin{bmatrix}
G_{1} & 0 & \cdots & 0 \\
\ast & G_{2} & \cdots & 0 \\
\vdots & \vdots & \ddots & \vdots \\
\ast & \ast & \cdots & G_{m}
\end{bmatrix},
\]
where each $G_1, \dots, G_m$ is an irreducible component with respect to all colours.
As the irreducible components of the $\La^{(i)}$ must be contained in some irreducible component of $G$, this permutation induces a lower triangular form for each $\La^{(i)}$ (by re-painting the edges).
A second permutation within each $G_1, \dots, G_m$ (possibly different for every $i$) induces the block lower triangular form for each $\La^{(i)}$.
For convenience we write
\[
\rho(\La) := \max\{ \rho(\La^{(1)}), \dots, \rho(\La^{(N)}) \}.
\]

%%%%%%%%%%%%%%%%%%%%%%%%%%%%%%%%
\begin{proposition}\label{P:strong entropy}
Let $\La = (\La^{(1)}, \dots, \La^{(N)}; \sim)$ be a higher-rank graph.
If $\B_{k, F}(\La)$ denotes the number of paths in $\La$ of length $k$ that are supported on $F$ then
\begin{align*}
h_{\La}^{s,F}
& =
\limsup_k \frac{1}{k} \log \big( \B_{k, F}(\La) \big) 
=
\max\{ \log \rho(\La^{(i)}) \mid i \in F \}.
\end{align*}
\end{proposition}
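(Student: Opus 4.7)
The plan is to reduce both $h_{\La}^{s,F}$ and $\limsup_k \frac{1}{k}\log \B_{k,F}(\La)$ to the exponential growth rate of a single matrix sum, and then to read off that rate from a generating function. First, I would unpack the operator-norm quantity. From (NC) applied with $\ell(\un{\la}) = \ell(\un{\mu}) = \ell(\umu)$ one has $T_{\umu}^* T_{\umu} = T_{s(\umu)}$, and since the $T_v$ are pairwise orthogonal projections,
$$\bigg\| \sum_{\ell(\umu) \in F, \, |\umu|=k} T_{\umu}^* T_{\umu} \bigg\| = \max_{v \in V} P_{k,F}(v), \qquad P_{k,F}(v) := |\{\umu \in \La \mid \ell(\umu) \in F, \, |\umu|=k, \, s(\umu) = v\}|.$$
The sandwich $\max_v P_{k,F}(v) \leq \B_{k,F}(\La) = \sum_v P_{k,F}(v) \leq |V| \max_v P_{k,F}(v)$ shows both sequences share the same exponential rate, which settles the first equality $h_\La^{s,F} = \limsup_k \frac{1}{k}\log \B_{k,F}(\La)$.

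Next, I would translate $\B_{k,F}(\La)$ into a matrix quantity via the factorization property. For every $\un{n}$ supported on $F$, unique factorization produces a bijection between equivalence classes in $\La^{\un{n}}(w,v)$ and colored edge-sequences from $v$ to $w$ in any fixed ordering of the colors, so $|\La^{\un{n}}(w,v)| = [\prod_{i \in F}(\La^{(i)})^{n_i}]_{wv}$; the product is unambiguous because the $\La^{(i)}$'s commute. Summing over $v,w$ and over multi-indices of total length $k$ supported on $F$,
$$\B_{k,F}(\La) = \mathbf{1}^T M_k \mathbf{1}, \qquad M_k := \sum_{|\un{n}| = k, \, \supp \un{n} \subseteq F} \prod_{i \in F}(\La^{(i)})^{n_i}.$$
Since $M_k$ is entrywise nonnegative and the ambient dimension is fixed, $\limsup_k \frac{1}{k}\log \B_{k,F}(\La) = \limsup_k \frac{1}{k}\log \|M_k\|$.

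Finally, I would read off the growth rate of $\|M_k\|$ from the matrix generating function
$$\sum_{k \geq 0} M_k z^k = \prod_{i \in F} (I - z \La^{(i)})^{-1},$$
obtained by expanding each factor as $\sum_{n \geq 0} z^n (\La^{(i)})^n$ (valid for $|z| < \rho(\La^{(i)})^{-1}$ by Gelfand) and rearranging via commutativity. Writing $\rho_F := \max\{\rho(\La^{(i)}) \mid i \in F\}$, the right-hand side converges absolutely whenever $|z| < \rho_F^{-1}$, so $\limsup_k \|M_k\|^{1/k} \leq \rho_F$. Conversely, picking $i_0 \in F$ with $\rho(\La^{(i_0)}) = \rho_F$, the single summand $\un{n} = k \bo{i}_0$ gives $M_k \geq (\La^{(i_0)})^k$ entrywise, hence $\|M_k\| \geq \|(\La^{(i_0)})^k\|$, and Gelfand's formula forces $\limsup_k \|M_k\|^{1/k} \geq \rho_F$. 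The main (and rather mild) technical point is the bookkeeping in the generating-function identity: one needs absolute convergence of the matrix product and distributivity of the coefficient of $z^k$, and both crucially use the commutativity of $\{\La^{(i)}\}_{i \in F}$ built into the higher-rank structure.
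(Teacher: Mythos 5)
Your proof is correct, and it is organized differently from the paper's. The paper deduces the proposition from two earlier results --- $h_\La^{s,F} = \max_{i\in F} h_\La^{s,\{i\}}$ from \cite[Proposition 7.3]{Kak18} and the one-colour identity $h_\La^{s,\{i\}} = \limsup_k \frac{1}{k}\log \B_{k,\{i\}}(\La) = \log\rho(\La^{(i)})$ from \cite[Theorem 8.9]{Kak17} --- and then combines them via the convolution estimate $\B_{k, F_1 \cupdot F_2}(\La) \leq \sum_{n=0}^k \B_{n,F_1}(\La)\,\B_{k-n,F_2}(\La)$, which forces the path-count limsup over $F$ to equal the maximum of the single-colour limsups. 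You instead give a self-contained argument: the relation $T_{\umu}^*T_{\umu}=T_{s(\umu)}$ and the orthogonality of the vertex projections turn the norm in the definition of $h_\La^{s,F}$ into $\max_v$ of source-based path counts, which is comparable to $\B_{k,F}(\La)$ up to the factor $|V|$; the factorization property converts $\B_{k,F}(\La)$ into $\mathbf{1}^T M_k \mathbf{1}$ with $M_k=\sum_{|\un{n}|=k,\,\un{n}\in F}\La^{(\un{n})}$; and the growth rate of $\|M_k\|$ is read off the generating function $\prod_{i\in F}(I-z\La^{(i)})^{-1}$, with the upper bound coming from its radius of convergence and the lower bound from the single summand $(\La^{(i_0)})^k$ plus Gelfand's formula. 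Your Cauchy-product step is the analytic counterpart of the paper's combinatorial convolution inequality, so the two proofs rest on the same mechanism; what yours buys is independence from the cited results of \cite{Kak17,Kak18}, at the cost of redoing the norm computation and the Perron--Frobenius/Gelfand input that the paper already has in hand. Two small points are worth making explicit in your write-up: the norm identity $\| \sum_{\ell(\umu)\in F, |\umu|=k} T_{\umu}^*T_{\umu}\| = \max_v P_{k,F}(v)$ uses that every $T_v\neq 0$ in $\N\T(\La)$ (clear from the Fock representation), and in the degenerate case where every $\La^{(i)}$ with $i\in F$ is nilpotent both sides of the claimed equality must be read as $-\infty$, which your estimates do deliver since then $M_k=0$ for all large $k$.
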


\begin{proof}
In \cite[Proposition 7.3]{Kak18} and \cite[Theorem 8.9]{Kak17} we have shown respectively that 
\[
h_\La^{s, F} = \max_{i \in F} h_\La^{s, \{i\}}
\qand
h_\La^{s, \{i\}} = \limsup_k \frac{1}{k} \log \B_{k, \{i\}}(\La) = \log \rho(\La^{(i)}).
\]
Since $|\B_{k, F_1 \cupdot F_2}| \leq \sum_{n=0}^k |\B_{n, F_1}| \cdot |\B_{k - n, F_2}|$, an argument similar to that in the proof of \cite[Proposition 7.3]{Kak18} yields
\[
\limsup_k \frac{1}{k} \log \B_{k, F}(\La)
=
\max \{ \limsup_k \frac{1}{k} \log \B_{k, \{i\}}(\La) \mid i \in F \},
\]
and the proof is complete.
\end{proof}

Our next step is to show that there is indeed a nonnegative phase transition whenever $\rho(\La) \geq 1$.
By combining with \cite[Proposition 7.6]{Kak18} we thus get that $\log \rho(\La)$ is then the largest phase transition for $\N\T(\La)$.
As we are looking for nonnegative eigenvectors we restrict our attention to eigencones rather than eigenspaces.

%%%%%%%%%%%%%%%%%%%%%%%%%%%%%%%%
\begin{proposition}\label{P:max phase}
Let $\La = (\La^{(1)}, \dots, \La^{(N)}; \sim)$ be a higher-rank graph.
If $\rho(\La) \geq 1$ then there exists an $F \subsetneq \{1, \dots, N\}$ such that $\Tr_{\log \rho(\La)}^F(A) \neq \mt$.
\end{proposition}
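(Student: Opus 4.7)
The plan is to produce a common nonnegative eigenvector $\tau$ of the commuting family $\{\La^{(i)}\}_{i=1}^N$ at which the eigenvalue of a distinguished $\La^{(i_0)}$ equals $\rho := \rho(\La)$, where $i_0$ is fixed so that $\rho(\La^{(i_0)}) = \rho$. After normalising $\tau$ to be unimodular, so that it corresponds to a tracial state on $A$, I will simply take $F := \{i \mid \La^{(i)} \tau \neq \rho \tau\}$. Since $i_0 \notin F$, this $F$ will automatically be a proper subset of $\{1, \dots, N\}$, and only the summability condition defining $\Tr_{\log \rho}^F(A)$ will remain to be verified.

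For the construction of $\tau$, I would start from the nonnegative eigencone $C_0 := \ker(\La^{(i_0)} - \rho I) \cap \bR_{\geq 0}^{|V|}$, which is nonempty by the Perron--Frobenius theorem and pointed as a subcone of $\bR_{\geq 0}^{|V|}$. Because the $\La^{(i)}$ commute, each of them preserves $\ker(\La^{(i_0)} - \rho I)$; because each is entrywise nonnegative, each also preserves $\bR_{\geq 0}^{|V|}$, and therefore preserves $C_0$. Iterating a finite-dimensional Krein--Rutman argument (every linear endomorphism of a pointed closed convex cone admits an eigenvector in that cone) over the remaining directions $i \neq i_0$, at each stage I pick an eigenvector $\tau_j$ of the next matrix $\La^{(i_j)}$ in the current cone $C_{j-1}$, set $C_j := C_{j-1} \cap \ker(\La^{(i_j)} - \la_{i_j} I)$, and observe that $C_j$ is again a pointed closed convex cone, nonempty since it contains $\tau_j$, and invariant under the still-to-be-diagonalised matrices by commutation. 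After $N - 1$ steps I obtain $\tau \in C_0$ with $\La^{(i)} \tau = \la_i \tau$ for every $i$ and $\la_{i_0} = \rho$, and I normalise to $\|\tau\|_1 = 1$.

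With $F$ defined as above, the bound $0 \leq \la_i \leq \rho(\La^{(i)}) \leq \rho$ combined with $\la_i \neq \rho$ for $i \in F$ forces $\la_i < \rho$ on $F$. The summability check is then a direct calculation: since $\tau$ is a simultaneous eigenvector with $\|\tau\|_1 = 1$, one has $\mathbf{1}^T \La^{(\un{n})} \tau = \prod_i \la_i^{n_i}$, so that
\[
c_{\tau, \log \rho}^F \;=\; \sum_{\un{n} \in F} \rho^{-|\un{n}|} \mathbf{1}^T \La^{(\un{n})} \tau \;=\; \sum_{\un{n} \in F} \prod_{i \in F} (\la_i/\rho)^{n_i} \;=\; \prod_{i \in F} \frac{1}{1 - \la_i/\rho} \;<\; \infty,
\]
which places $\tau$ in $\Tr_{\log \rho}^F(A)$.

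The main obstacle is the first step: extracting a common nonnegative eigenvector of commuting nonnegative matrices, with the additional constraint that $\La^{(i_0)}$ acts by its full spectral radius $\rho$. The inductive shrinking of cones requires every intermediate cone to be nonempty, pointed, closed and invariant under the remaining matrices; pointedness and closedness are inherited from $\bR_{\geq 0}^{|V|}$, invariance follows from commutation with the already-fixed matrices, and nonemptiness at each stage is exactly what the cone version of Perron--Frobenius delivers.
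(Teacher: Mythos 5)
Your proof is correct, and its skeleton is the same as the paper's: produce a common nonnegative eigenvector $\tau$ of the commuting family with the distinguished matrix acting at $\rho:=\rho(\La)$, take $F$ to be the set of directions whose eigenvalue is strictly below $\rho$ (so $F\subsetneq\{1,\dots,N\}$ and the eigenvector conditions for $i\notin F$ hold for free), and then verify the summability. The difference lies in how the two ingredients are justified. For the common eigenvector the paper does not argue by hand: it quotes the Perron--Frobenius theorem \cite[Theorem 8.3.1]{HJ13} for nontriviality of the eigencone of the distinguished matrix and then \cite[Theorem 3.5]{KSS12} for a common nonnegative eigenvector of the commuting family in that eigencone; your inductive cone-shrinking is a self-contained proof of essentially that cited fact, and it is sound --- each intermediate cone is closed, pointed, nonzero and invariant for exactly the reasons you give, and the Krein--Rutman/Vandergraft step applies because each cone is proper inside its own linear span, which the remaining matrices preserve. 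For the summability the paper goes through its entropy apparatus, bounding $h_\La^{\tau,F}$ by strong entropies and appealing to the root test (where the strict inequality really rests on relating the entropy to the subgraph communicated by $\supp\tau$, as in Proposition \ref{P:F chara}), whereas you compute $c^F_{\tau,\log\rho}$ outright via $\mathbf{1}^T\La^{(\un{n})}\tau=\prod_i\la_i^{n_i}$ as the convergent product $\prod_{i\in F}(1-\la_i/\rho)^{-1}$; this is more elementary and makes the role of the strict inequalities $\la_i<\rho$ on $F$ completely transparent. One shared convention point, not a gap: if every $\la_i=\rho$ then your $F$ (like the paper's) is empty, in which case the conclusion is to be read as $\tau\in\Avt_{\log\rho}(A)$, consistent with how the paper uses the statement later.
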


\begin{proof}
Without loss of generality suppose that $\rho(\La) = \rho(\La^{(1)}) =: \la_1$.
By \cite[Theorem 8.3.1]{HJ13} the eigencone of $\La^{(1)}$ at $\la_1$ is non-trivial.
Then \cite[Theorem 3.5]{KSS12} asserts that there exists a nonnegative (and non-zero) common eigenvector $w$ for $\La^{(1)}, \La^{(2)}, \dots, \La^{(N)}$ at some eigenvalues $\la_1, \la_2, \dots, \la_N$.
Since $w \geq 0$ and all matrices have nonnegative entries it transpires that every $\la_i$ is nonnegative.
Furthermore \cite[Theorem 8.3.2]{HJ13} asserts that $\la_i \leq \rho(\La^{(i)}) \leq \la_1$.
Let
\[
F := \{i \in \{1, \dots, N\} \mid \la_i < \la_1 \},
\]
and note that $F^c \neq \mt$ as $1 \in F^c$.
Let $\tau$ be the trace corresponding to the $\ell^1$-normalization of $w$.
By definition we have that $\La^{(i)} \tau = \la_1 \tau$ for all $i \notin F$, while
\[
h_\La^{\tau, F} \leq h_\La^{s, F} = \max_{i \in F} h_\La^{s, \{i\}} = \max_{i \in F} \log \la_i < \log \la_1.
\]
That is $c_{\tau, \log \la_1}^F < \infty$, and so $\tau \in \Tr_{\log \la_1}^F(A)$.
\end{proof}

The $F$-entropy of a trace depends on the irreducible components with which its support communicates.
We introduce some terminology to this end.

%%%%%%%%%%%%%%%%%%%%%%%%%%%%%%%%
\begin{definition}
Let $\La = (\La^{(1)}, \dots, \La^{(N)}; \sim)$ be a higher-rank graph.
A subgraph $H = (H^{(1)}, \dots, H^{(N)}; \sim)$ is said to be a \emph{sink} if there exists a permutation of vertices so that 
\[
\La^{(i)}
=
\begin{bmatrix}
\ast & 0 \\
\ast & H^{(i)}
\end{bmatrix}
\foral
i=1, \dots, N.
\]
\end{definition}

Note that the permutation of the vertices is one and the same for all $i$.
Every $\La$ has at least itself as a sink subgraph.
It is clear that every eigenvector of $H^{(i)}$ extends to an eigenvector of $\La^{(i)}$ (at the same eigenvalue) by adding zeroes.

%%%%%%%%%%%%%%%%%%%%%%%%%%%%%%%%
\begin{definition}
Let $\La = (\La^{(1)}, \dots, \La^{(N)}; \sim)$ be a higher-rank graph.
For $V' \subseteq V$ write $\overleftarrow{\La}(V')$ for the subgraph with vertex set
\[
\{v \in V \mid v \La v' \neq \mt \text{ for some } v' \in V' \}
\] 
and edge set containing all the possible edges of $\La$ connecting those vertices.
For $F \subseteq \{1, \dots, N\}$ we write $\overleftarrow{\La}(V'; F)$ for the subgraph defined over the $F$-coloured paths.
\end{definition}

The left arrow reflects the way we read paths from right to left.
It follows that $\overleftarrow{\La}(V')$ and $\overleftarrow{\La}(V';F)$ are higher-rank graphs; in particular each one of these is a sink subgraph of $\La$.
In the following proposition we connect the $F$-tracial entropy of a $\tau$ in $\La$ with the $F$-strong entropy of the (sink) forward subgraph defined by $\supp \tau$.

%%%%%%%%%%%%%%%%%%%%%%%%%%%%%%%%
\begin{proposition}\label{P:F chara}
Let $\La = (\La^{(1)}, \dots, \La^{(N)}; \sim)$ be a higher-rank graph and $\be>0$.
For $\tau \in \Tr(A)$ and $F \subseteq \{1, \dots, N\}$ let the sink subgraphs 
\[
H := \overleftarrow{\La}(\supp \tau; F)
\qand
G := \overleftarrow{\La}(\supp \tau).
\]
Then
\[
h_H^{s, F} = h_H^{\tau, F} = h_G^{\tau, F} = h_\La^{\tau, F}.
\]
Moreover the following are equivalent:
\begin{enumerate}
\item $\tau$ is in $\Tr_\be^F(A)$;
\item $h_\La^{\tau, F} < \be$ and $\La^{(i)} \tau = e^{\be} \tau$ for all $i \notin F$;
\item $\rho(H^{(i)}) < e^{\be}$ for all $i \in F$, $G^{(i)} \tau|_G = e^{\be} \tau|_G$ for all $i \notin F$, and $\tau(T_v) = 0$ for all $v \notin G$.
\end{enumerate}
\end{proposition}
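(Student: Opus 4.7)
I would first establish the chain of entropy equalities, then derive the equivalences (i)$\Leftrightarrow$(ii)$\Leftrightarrow$(iii) by definition-chasing, keeping the boundary case of (i)$\Leftrightarrow$(ii) for last. For the equalities, notice that any $\umu$ contributing to the $\tau$-weighted sum defining $h_\La^{\tau, F}$ satisfies $s(\umu) \in \supp \tau$ and $\ell(\umu) \in F$, so all its vertices are $F$-reachable from $\supp \tau$ and hence lie in the vertex set $V_H$ of $H$. Consequently the summands defining $h_\La^{\tau, F}$, $h_G^{\tau, F}$ and $h_H^{\tau, F}$ are identical. The bound $\tau(T_{s(\umu)}) \leq 1$ gives $h_H^{\tau, F} \leq h_H^{s, F}$. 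For the reverse, by construction of $H$ every $v \in V_H$ admits an $F$-coloured path $\un{\nu}_v$ in $H$ from some vertex of $\supp \tau$ to $v$; set $L := \max_v |\un{\nu}_v|$. For an $F$-coloured $\umu$ in $H$ of length $k$ with $s(\umu) = v$, the concatenation $\umu \, \un{\nu}_v$ is an $F$-coloured path in $H$ of length $k + |\un{\nu}_v|$ starting in $\supp \tau$, and the unique factorisation property makes the map $\umu \mapsto \umu \, \un{\nu}_v$ injective for fixed $v$. Writing $D_k$ for the $\tau$-weighted count and $c := \min\{\tau(T_w) \mid w \in \supp \tau\} > 0$, summing over $v \in V_H$ yields $\B_{k, F}(H) \leq c^{-1}\,|V_H|\, \sum_{j = 0}^L D_{k+j}$; taking $\limsup \frac{1}{k} \log$ and invoking Proposition \ref{P:strong entropy} gives $h_H^{s, F} \leq h_H^{\tau, F}$.

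Granted these equalities, (ii)$\Leftrightarrow$(iii) reduces to direct observations. The condition $h_\La^{\tau, F} < \be$ is the same as $\max_{i \in F} \rho(H^{(i)}) < e^{\be}$ by Proposition \ref{P:strong entropy} applied to $H$. The clause $\tau(T_v) = 0$ for $v \notin G$ is automatic from $\supp \tau \subseteq V_G$. For $i \notin F$ and $w \in V$, only $v \in \supp \tau$ contribute to $(\La^{(i)} \tau)_w$; if $w \notin V_G$ no edge $v \to w$ exists (otherwise $w \in V_G$) and both sides of $\La^{(i)} \tau = e^{\be} \tau$ vanish, while for $w \in V_G$ the entries of $\La^{(i)}$ on $\supp \tau \times V_G$ agree with those of $G^{(i)}$, and the identity restricts to $G^{(i)} \tau|_G = e^{\be} \tau|_G$.

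For (i)$\Leftrightarrow$(ii), the implication $h_\La^{\tau, F} < \be \Rightarrow c_{\tau, \be}^F < \infty$ is the Cauchy root test applied to the nonnegative series $c_{\tau, \be}^F = \sum_k e^{-k \be} D_k$. Conversely, $c_{\tau, \be}^F < \infty$ immediately yields $h_\La^{\tau, F} \leq \be$, and the task is to exclude equality. Suppose $h_\La^{\tau, F} = \be$; then by the first paragraph some $\rho(H^{(j)}) = e^{\be}$ for $j \in F$. Let $V_C$ be the vertex set of an irreducible component of $H^{(j)}$ realising this spectral radius, fix $u_0 \in V_C$, and pick an $F$-coloured path $\un{\nu}$ in $H$ from some $v' \in \supp \tau$ to $u_0$. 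Restricting $c_{\tau, \be}^F$ to paths of the form $\un{\la} \, \un{\nu}$ with $\un{\la}$ a $j$-coloured closed walk at $u_0$ inside $V_C$ gives the lower bound
\[
\tau(T_{v'}) \, e^{-|\un{\nu}| \be} \sum_{k \geq 0} e^{-k \be} \, \bigl( (H^{(j)}|_{V_C})^k \bigr)_{u_0 u_0},
\]
and Perron-Frobenius applied to the irreducible matrix $H^{(j)}|_{V_C}$ of spectral radius $e^{\be}$ forces these diagonal entries to be bounded below by $e^{k \be}$ (up to a constant) along an arithmetic progression in $k$. Hence the series diverges, contradicting $c_{\tau, \be}^F < \infty$.

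The main obstacle is the boundary-case step just sketched: it is the only place where one genuinely combines the $F$-reachability built into $V_H$ with the finer Perron asymptotics of an irreducible component of some $H^{(j)}$, beyond merely invoking its spectral radius. The rest of the argument is direct definition-chasing together with Proposition \ref{P:strong entropy}.
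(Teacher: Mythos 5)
Your proposal is correct, and its overall skeleton (establish the entropy equalities first by identifying the nonzero summands for $\La$, $G$, $H$; then get (i)$\Leftrightarrow$(ii) from the root test plus a divergence argument at the boundary, and (ii)$\Leftrightarrow$(iii) by entrywise/block restriction of the eigenvector equation) matches the paper. The difference lies in how the two lower-bound estimates are implemented. The paper proves $h_H^{s,F}\leq h_\La^{\tau,F}$ by a single Perron--Frobenius estimate: choosing a maximal irreducible component $H^{(i_0)}_p$ and a vertex $v_0\in\supp\tau$ connected into it, it gets a pointwise-in-$k$ bound $\sum_{\ell(\umu)\in F,\,|\umu|=k}\tau(T_\umu^*T_\umu)\geq C\la^k$ from column sums of powers of $H^{(i_0)}_p$; this same pointwise bound is then recycled verbatim to rule out $h_\La^{\tau,F}=\be$ when $c_{\tau,\be}^F<\infty$, since the terms $e^{-k\be}\cdot C\la^k$ do not tend to zero if $\la=e^\be$. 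You instead prove $h_H^{s,F}\leq h_H^{\tau,F}$ by the elementary backwards-extension/injectivity count $\B_{k,F}(H)\leq c^{-1}|V_H|\sum_{j=0}^{L}D_{k+j}$, which avoids Perron--Frobenius entirely at this stage (and handles the nilpotent case $\la=0$ uniformly, which the paper treats separately), but since this only controls a limsup you then need a separate Perron--Frobenius argument for the boundary case, via diagonal entries of an irreducible component at spectral radius $e^\be$ along the arithmetic progression of its period, precomposed with a connecting path from $\supp\tau$; that step is correct (primitivity of the $p$-th power on a cyclic class gives $(B^{mp})_{u_0u_0}\geq C e^{mp\be}$ for large $m$, so the series diverges). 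In short: the paper's one column-sum estimate is more economical because it serves both purposes, while your route makes the entropy equalities more elementary at the cost of a second, slightly more delicate, PF input; both are complete proofs, and your treatment of (ii)$\Leftrightarrow$(iii) is just the entrywise form of the paper's block-matrix computation.
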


\begin{proof}
Set  $\la := \max\{\rho(H^{(i)}) \mid i \in F\}$.
Since $\tau(T_{\umu}^* T_{\umu}) = \tau(T_{s(\umu)})$ is zero unless $s(\umu) \in \supp \tau$, we get
\begin{align*}
\sum_{\ell(\umu) \in F, |\umu| = k} \tau(T_{\umu}^* T_{\umu})
& =
\sum_{\ell(\umu) \in F, |\umu| = k, s(\umu) \in \supp \tau} \tau(T_{\umu}^* T_{\umu}) \\
& =
\sum_{\umu \in H, |\umu| = k, s(\umu) \in \supp \tau} \tau(T_{\umu}^* T_{\umu})
=
\sum_{\umu \in H, |\umu| = k} \tau(T_{\umu}^* T_{\umu}),
\end{align*}
where we used that $\ell(\umu) \in F$ and $s(\umu) \in \supp \tau$ if and only if $\umu \in H$ and $s(\umu) \in \supp \tau$, as $H$ is forwards $F$-defined by $\supp \tau$.
Therefore, by Proposition \ref{P:strong entropy}, on one hand we have
\begin{align*}
h_{\La}^{\tau, F}
=
h_{H}^{\tau, F}
\leq
h_{H}^{s, F}
 =
\max \{ h_{H}^{s, \{i\}} \mid i \in F \} 
 =
\max\{\log \rho(H^{(i)}) \mid i \in F\}
=
\log \la.
\end{align*}
If $\la = 0$ then every $H^{(i)}$ is nilpotent and thus $\B_{k,F}(H) = \mt$ eventually.
In this case $h_H^{s,F} = h_H^{\tau, F} = h_\La^{\tau, F} = - \infty$.
Now suppose that $\la \neq 0$ and let $H^{(i_0)}_p$, with $i_0 \in F$, be the irreducible component of some $H^{(i_0)}$ so that
\[
\la = \rho(H^{(i_0)}_p) = \rho(H^{(i_0)}).
\]
Let $v_0$ be a vertex in the support of $\tau$ that connects with a vertex $v_s$ in $H^{(i_0)}_p$ through an $F$-coloured path of length $N_0$.
By the Perron-Frobenius theory on $H^{(i_0)}_p$ there exists an $M > 0$ such that
\[
\sum_{t \in H} [ (H^{(i_0)}_p)^{k-N_0} ]_{t s} \geq M \cdot \la^{k- N_0}.
\]
See for example the comments preceding \cite[Proposition 4.2.1]{LM95}.
The $F$-coloured paths of length $k$ starting at $v_0$ are more than the $\{i_0\}$-coloured paths of length $k-N_0$ that start at $v_s$ and terminate inside $H^{(i_0)}_p$.
Thus we get
\begin{align*}
\sum_{\ell(\umu) \in F, |\umu| = k} \tau(T_{\umu}^* T_{\umu})
& \geq
\tau(T_{v_0}) \sum_{t \in H} [ (H^{(i_0)}_p)^{k-N_0} ]_{t s}
\geq
(\tau(T_{v_0}) M \la^{-N_0}) \cdot \la^{k}.
\end{align*}
Hence $h_{\La}^{\tau, F} \geq \log \la$, which shows that
\[
h_H^{s, F} = h_H^{\tau, F} = h_{\La}^{\tau, F}.
\]
We can apply the second equality for $G$ in place of $\La$ and get that
\[
h_{G}^{\tau, F} = h_{H}^{\tau, F} =h_{\La}^{\tau, F},
\]
which completes the proof of the first part.
Now we move on to prove the equivalences of items (i), (ii) and (iii).

\smallskip

\noindent [(i) $\Leftrightarrow$ (ii)]: 
Every $\tau \in \Tr_\be^F(A)$ must come from a common eigenvector of the $\{\La^{(i)}\}_{i \notin F}$ at $e^{\be}$.
So we just need to check what happens with $c_{\tau, \be}^F$.
By the root test we have that $c_{\tau, \be}^F < \infty$ when $h_\La^{\tau, F} < \be$.
For the converse, if $\max_{i \in F} \rho(\La^{(i)})) = 0$ then $h_{\La}^{\tau, F} = -\infty < 0 < \be$, trivially.
Otherwise $h_{\La}^{\tau, F} \in [0, \infty)$ and set 
\[
\la := \exp({h_\La^{\tau, F}}).
\]
From the first part we deduce a $v_0 \in \supp \tau$, an $N_0 \in \bN$ and an $M>0$ such that
\[
\sum_{\ell(\umu) \in F, |\mu| = k} e^{-k \be} \cdot \tau(T_{\umu}^* T_{\umu})
\geq
(\tau(T_{v_0}) M \la^{-N_0}) \cdot (e^{-\be} \la)^{k}.
\]
Hence $\la < e^{\be}$ when $c_{\tau, \be}^F < \infty$, giving the required $h_{\La}^{\tau, F} < \be$.

\smallskip

\noindent [(i) $\Leftrightarrow$ (iii)]: 
Let $\tau \in \Tr_{\be}^F(A)$.
If $i \in F$ then
\[
\log \rho(H^{(i)}) \leq h_H^{s,F} = h_{\La}^{\tau, F} < \be.
\]
The support of $\tau$ sits inside the sink subgraph $G$, by definition of $G$ and so $\tau(T_v) = 0$ for all $v \notin G$.
Hence for the restriction $\tau|_G$ of $\tau$ on $G$, and $i \notin F$  we obtain 
\[
\begin{bmatrix}
0 \\ G^{(i)} \tau|_G 
\end{bmatrix}
=
\begin{bmatrix}
\ast & 0 \\
\ast & G^{(i)} 
\end{bmatrix}
\cdot
\begin{bmatrix}
0 \\ \tau|_G
\end{bmatrix}
=
\La^{(i)} \tau
=
e^\be \tau
=
\begin{bmatrix}
0 \\ e^{\be} \tau|_G 
\end{bmatrix}.
\]
Conversely suppose that $\tau$ satisfies item (iii).
As $G$ is a sink subgraph, the trace $\tau|_G$ produces a common eigenvector for the $\{ \La^{(i)} \}_{i \notin F}$ at $e^{\be}$ by extending by zeroes.
Moreover by the first part and Proposition \ref{P:strong entropy} applied for $H$ we get that
\[
h_{\La}^{\tau, F} = h_{H}^{\tau, F} \leq h_{H}^{s, F} = \max_{i \in F} \log \rho(H^{(i)}) < \be,
\]
and so $c_{\tau, \be}^F < \infty$.
Thus $\tau$ will be in $\Tr_{\be}^F(A)$.
\end{proof}

%%%%%%%%%%%%%%%%%%%%%%%%%%%%%%%%
\begin{definition}
We say that $\N\T(\La)$ has a \emph{nonnegative subharmonic phase transition at $\log \la \geq 0$} if there exists an $F \subsetneq \{1, \dots, N\}$ such that $\Eq_{\log \la}^F(\N\T(\La)) \neq \mt$.
\end{definition}

We have now arrived at the main result of the section.

%%%%%%%%%%%%%%%%%%%%%%%%%%%%%%%%
\begin{theorem}\label{T:main gi}
Let $\La = (\La^{(1)}, \dots, \La^{(N)}; \sim)$ be a higher-rank graph and set
\[
\fS := \{\rho(H) \mid \textup{ $H$ is a sink subgraph of $\La$} \}.
\]
Then $\N\T(\La)$ has nonnegative subharmonic phase transitions exactly at $\{\log \la \mid 1 \leq \la \in \fS\}$.
Furthermore, for all F $\subsetneq \{1, \dots, N\}$ we have that
\[
\GEq_{\log \la}^F(\N\T(\La)) \simeq \Tr_{\log \la}^F(A)
\text{ when }
1 \leq \la \in \fS.
\]
On the other hand if $\la < \la'$ are successive in $\fS$ then
\[
\mt \neq \GEq_\be^{\fty}(\N\T(\La)) \simeq \sca{v \in V \mid \rho\left(\overleftarrow{\La}(v)\right) \leq \la}
\foral 0 < \be \in (\log \la, \log \la'].
\]
\end{theorem}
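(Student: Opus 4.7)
The plan is to combine the parametrizations $\Psi$ and $\Phi^F$ recalled in Section \ref{S:pre} with Propositions \ref{P:max phase} and \ref{P:F chara}. I would split the argument into three pieces: the finite-type formula, existence of a non-finite subharmonic phase transition at $\log \la$ for each $1 \leq \la \in \fS$, and the converse that these are the only non-finite transitions. Two ingredients will be used throughout. First, every $\tau \in \Tr_{\log \la}^F(A)$ automatically annihilates eventual $F^c$-sources: iterating $\prod_{i \notin F}(\La^{(i)})^{k_0}\tau = \la^{|F^c|k_0}\tau$ at the row of such a vertex $v$ forces $\tau(T_v) = 0$. Second, each continuous bijection $\Phi^F$ will become a weak*-homeomorphism by compactness inside the finite-dimensional state space of $A$.

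For the finite-type formula, $\Phi^{\fty}$ identifies $\GEq_\be^{\fty}(\N\T(\La))$ with $\Tr_\be^{\fty}(A)$, which by Proposition \ref{P:F chara} consists of traces satisfying $\rho(\overleftarrow{\La}(\supp \tau)) < e^\be$. A strong-connectivity argument gives $\rho(\overleftarrow{\La}(\supp \tau)) = \max\{\rho(\overleftarrow{\La}(v)) \mid v \in \supp \tau\}$: any strongly connected component of $\overleftarrow{\La}(\supp \tau)^{(i)}$ contains a vertex forward-reachable from some $v \in \supp \tau$ and, by its strong $i$-connectivity, is then contained in $\overleftarrow{\La}(v)$. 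Each $\overleftarrow{\La}(v)$ is a sink subgraph so $\rho(\overleftarrow{\La}(v)) \in \fS$, and for $\be \in (\log \la, \log \la']$ the strict inequality collapses to $\rho(\overleftarrow{\La}(v)) \leq \la$, giving the advertised span of vertex states. Non-emptiness is automatic since any $v$ in a sink subgraph $H$ realising $\la$ satisfies $\overleftarrow{\La}(v) \subseteq H$ and hence $\rho(\overleftarrow{\La}(v)) \leq \la$.

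For existence at $\log \la$ with $1 \leq \la \in \fS$, I would fix a sink subgraph $H$ with $\rho(H) = \la$ and apply Proposition \ref{P:max phase} internally to $H$ to produce an $F \subsetneq \{1, \dots, N\}$ and a trace $\tau_H \in \Tr_{\log \la}^F(A_H)$. Extending by zero off $V(H)$ yields a trace $\tau$ on $A$; the sink form $\La^{(i)} = \left(\begin{smallmatrix}\ast & 0 \\ \ast & H^{(i)}\end{smallmatrix}\right)$ gives $\La^{(i)}\tau = \la\tau$ for $i \notin F$, and $F$-paths sourced in $V(H)$ stay in $V(H)$ by the sink property, so $c_{\tau, \log \la}^F(\La) = c_{\tau_H, \log \la}^F(H) < \infty$. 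Thus $\tau \in \Tr_{\log \la}^F(A)$ and $\Phi^F$ delivers a state in $\GEq_{\log \la}^F(\N\T(\La))$.

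The converse is the principal obstacle. Given $\tau \in \Tr_{\log \la}^F(A)$, the equations $\La^{(i)} \tau = \la \tau$ for $i \notin F$ together with nonnegativity of $\tau$ force $\supp \tau$ to be closed under $F^c$-coloured edges, and the factorization property of $\La$ then gives $V(\overleftarrow{\La}(\supp \tau)) = V(\overleftarrow{\La}(\supp \tau; F))$ by pushing the $F^c$-segments of any outgoing path to the front, where they remain in $\supp \tau$. Combined with Proposition \ref{P:F chara} this yields $\rho(G^{(i)}) < \la$ for every $i \in F$, where $G := \overleftarrow{\La}(\supp \tau)$. For $i \notin F$, suppose for contradiction that an irreducible component $B$ of $G^{(i)}$ satisfied $\rho(B) > \la$; fix $s \in \supp \tau$ and $b \in B$ joined by some $F$-path of degree $f$, and apply factorization to paths of bidegree $(f, k\Bi)$ from $s$ to $b$ for large $k$. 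The ``$F$-first then $i$-second'' count grows like $\rho(B)^k$, while the ``$i$-first then $F$-second'' count is bounded by $\la^k$ times a constant, because the $i$-prefix lands in $\supp \tau$ where $\tau > 0$ is an eigenvector of $\La^{(i)}|_{\supp \tau}$ at $\la$ and hence $\rho(\La^{(i)}|_{\supp \tau}) = \la$ by Perron. The contradiction forces $\rho(G^{(i)}) = \la$, so $G$ is a sink subgraph realising $\la \in \fS$. Absence of subharmonic transitions outside $\{\log \la : 1 \leq \la \in \fS\}$ then follows by combining this converse with the decomposition of \cite[Theorem A]{Kak18} into subharmonic parts.
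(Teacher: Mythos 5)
Most of your outline coincides with the paper's proof: the automatic vanishing of $\tau$ on eventual $F^c$-sources (so that $\Phi^F$ is onto $\Tr_\be^F(A)$), the existence of a subharmonic state at $\log\la$ for $1\le\la\in\fS$ by applying Proposition \ref{P:max phase} inside a sink subgraph $H$ with $\rho(H)=\la$ and extending by zero, and the identification and stability of the finite part on $(\log\la,\log\la']$ via $\rho(\overleftarrow{\La}(v))$ are all the paper's steps. Your converse, however, takes a genuinely different route: the paper simply squeezes entropies, namely $\be=h_\La^{\tau,\{i_0\}}\le h_\La^{\tau}\le\be$ (the upper bound from \cite[Theorem D]{Kak18}) and then reads off $e^\be=\rho\left(\overleftarrow{\La}(\supp\tau)\right)\in\fS$ from Propositions \ref{P:strong entropy} and \ref{P:F chara}, whereas you re-derive the same conclusion by showing directly that $G=\overleftarrow{\La}(\supp\tau)$ is a sink subgraph with $\rho(G^{(i)})<e^\be$ for $i\in F$ and $\rho(G^{(i)})=e^\be$ for $i\notin F$, using $F^c$-closedness of $\supp\tau$, the factorization property and a two-sided path count. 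That argument is sound, just longer than necessary; also record the one-line reduction from $\Eq^F$ to $\GEq^F$ via the gauge conditional expectation, since the notion of phase transition is phrased through $\Eq^F_{\log\la}\neq\mt$.

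The genuine gap is the weak*-homeomorphism claim. You dismiss it ``by compactness inside the finite-dimensional state space of $A$'', but compactness of $\Tr_\be^F(A)$ only helps once you already know that $(\Phi^F)^{-1}$ is weak*-continuous, and the continuous-bijection-from-compact-to-Hausdorff trick applied in the other direction would require compactness of $\GEq_\be^F(\N\T(\La))$, which is not available at this stage: the defining condition $\sum_{\un{n}\in F}\vphi(Q_F^{\un{n}})=1$ is a supremum condition and is not weak*-closed a priori (a net of $F$-type states could leak mass into other subharmonic parts in the limit), and its closedness is essentially equivalent to the statement you are trying to prove. This is precisely where the paper invests a real argument, following \cite[Theorem 8.13]{Kak17}: for $\be\in(\log\la,\log\la']$ one shows that $c_{\tau_P,\be}^{\{1,\dots,N\}}<\infty$ for \emph{every} trace $\tau$ after compressing to $V'=\{v\mid\rho(\overleftarrow{\La}(v))\le\la\}$, deduces via Banach--Steinhaus that $\sum_{k}e^{-k\be}\sum_{|\umu|=k,\,s(\umu)\in V'}T_{\umu}^*aT_{\umu}$ converges in $A$, and then uses monotone convergence to get $c_{\tau_j,\be}\to c_{\tau,\be}$ along weak*-convergent nets, which yields weak*-continuity of $(\Phi^{\fty})^{-1}$; for general $F$ one first extends $\tau$ to $\wt\tau$ on the $F^c$-part and repeats the argument. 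You need to supply (or cite) an argument of this kind; the compactness appeal as stated does not close the step.
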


\begin{proof}
As phase transitions occur at common eigenvalues of subsets of $\{\La^{(i)}\}_{i=1}^N$ they are isolated points.
On the other hand the finite-part is always gauge-invariant by \cite[Proposition 3.4]{Kak18} and we will show that it stays stable in the induced half-open half-closed intervals.
The conditional expectation dominates the rotational action and therefore we have that $\Eq_\be^F(\N\T(\La)) \neq \mt$ if and only if $\GEq_\be^F(\N\T(\La)) \neq \mt$.
Hence the phase transitions of the $\Eq$-simplex coincide with those of the $\GEq$-simplex, to which we now restrict.

First we show that every $\tau \in \Tr_\be^F(A)$ automatically annihilates the eventual $F^c$-sources.
Hence the parametrization of the simplex $\GEq_\be^F(\N\T(\La))$ is not just in, but it is onto $\Tr_\be^F(A)$.
For convenience suppose that $F = \{N'+1, \dots, N\}$, and let $v \in V$ be an eventual $F^c$-source.
Hence let $k_0 \in \bN$ such that
\[
T_v T_{\umu} = 0 \; \textup{ when } \; k_0 \sum_{i = 1}^{N'} \Bi \leq \ell(\umu) \in \{1, \dots, N'\}.
\]
Equivalently we have that $D_{v v'} = 0$ for all $v' \in V$, for the matrix
\[
D := \prod_{i = 1}^{N'} (\La^{(i)})^{k_0}.
\]
A trace $\tau \in \Tr_\be^F(A)$ is an eigenvector of each $\La^{(i)}$ at $e^\be$ with $i \in \{1, \dots, N'\}$, so that
\begin{align*}
e^{N' k_0 \be} \cdot \tau(T_v) 
& =
\bigg[\prod_{i=1}^{N'} (\La^{(i)})^{k_0} \tau \bigg](T_v) 
=
\big[ D\tau \big](T_v)
=
\sum_{v' \in V} D_{v v'} \tau(T_{v'})
=
0,
\end{align*}
and thus $\tau(T_v) = 0$.

Next we show that $\{\log \la \mid 1 \leq \la \in \fS\}$ is the set of nonnegative subharmonic phase transitions. 
Let $1 \leq \la \in \fS$ and fix $H$ be a sink component of $\La$ for which $\la = \rho(H)$.
We can then use the same order on the vertices to write
\[
\La^{(i)}
=
\begin{bmatrix}
\ast & 0 \\
\ast & H^{(i)}
\end{bmatrix}
\foral
i=1, \dots, N.
\]
By construction the $H^{(i)}$ define a higher-rank graph $H$, and Proposition \ref{P:max phase} induces an $F \subsetneq \{1, \dots, N\}$ and a $\tau$ supported in $H$ with $h_{H}^{\tau, F} < \log \la$ that is a common eigenvector for $\{H^{(i)}\}_{i \notin F}$.
Let $G := \overleftarrow{\La}(\supp \tau)$ and apply Proposition \ref{P:F chara} twice for $G \subseteq \La$ and for $G \subseteq H$ to get
\[
h_{\La}^{\tau, F} = h_{G}^{\tau, F} = h_{H}^{\tau, F} < \log \la,
\]
and that
\[
\La^{(i)} \tau = 
\begin{bmatrix}
0 \\ G^{(i)} \tau|_G 
\end{bmatrix} 
=
\begin{bmatrix}
0 \\ H^{(i)} \tau|_H 
\end{bmatrix}
= \la \tau \foral i \notin F,
\]
where we used that by definition $\supp \tau \subseteq G \subseteq H$.
Hence $\tau \in \Tr_{\log \la}^F(A)$ and thus it induces an $F$-subharmonic KMS-state for $\N\T(\La)$.
Conversely let $\be > 0$ and $F \subsetneq \{1, \dots, N\}$ so that $\GEq_\be^F(\N\T(\La)) \neq \mt$, and consider a $\tau \in \Tr_\be^F(A) \neq \mt$.
As $\tau$ is an eigevector for at least one $\La^{(i_0)}$ we get that $h_{\La}^{\tau, \{i_0\}} = \be$ and so
\[
\be = h_{\La}^{\tau, \{i_0\}} \leq h_\La^\tau \leq \be.
\]
However Propositions \ref{P:strong entropy} and \ref{P:F chara} give
\[
\be = h_\La^\tau = h_{\overleftarrow{\La}(\supp \tau)}^s = \log \rho\left(\overleftarrow{\La}(\supp \tau)\right).
\]
By definition $\overleftarrow{\La}(\supp \tau)$ is a sink subgraph of $\La$, and so $e^\be \in \fS$.

Now let $\la < \la'$ be two successive points in $\fS$ and let a positive $\be \in (\log \la, \log \la']$.
Let $H$ be a sink subgraph with $\rho(H) = \la$ and let $v$ be a vertex in $H$.
Then $\tau = \de_v$ has entropy less or equal than $h_H^s = \log \la < \be$ by Proposition \ref{P:F chara} and so is in $\Tr_\be^{\fty}(A)$.
Thus we have shown that $\Eq_\be^{\fty}(\N\T(\La)) \neq \mt$ and also that
\[
\Tr_\be^{\fty}(A) \supseteq \sca{v \in V \mid \rho\left(\overleftarrow{\La}(v)\right) \leq \la}.
\]
Now let $\tau \in \Tr_\be^{\fty}(A)$.
Then by Proposition \ref{P:F chara} we get that
\[
e^\be > e^{h_\La^{\tau}} = \rho\left(\overleftarrow{\La}(\supp \tau)\right) \in \fS.
\]
Hence $h_\La^{\tau} \neq \la''$ for all $\la''$ with $\la' < \la'' \in \fS$, and so $h_\La^{\tau} \leq \log \la$.
Thus any component that is communicated by $\supp \tau$ has numerical radius at most $\la$ and so 
\[
\supp \tau \subseteq \{v \in V \mid \rho\left(\overleftarrow{\La}(v)\right) \leq \la \}.
\]
Thus $\Tr_\be^{\fty}(A)$ is generated by the vertex set on the right hand side.
As the description depends only on $\la$ we get that $\Tr_\be^{\fty}(A)$ is the same for all $\be \in  (\log \la, \log \la']$.

It is left to show that the parametrization $\Phi^F$ at $\be>0$ is a weak*-homeomorphism.
It suffices to show that it has a weak*-continuous inverse, which can be done as in \cite[Theorem 8.13]{Kak17}.
In short, first fix $F =\{1, \dots, N\}$ and $\be \in (\log \la, \log \la']$, and set 
\[
V' := \{v \in V \mid \rho\left(\overleftarrow{\La}(v)\right) \leq \la \}
\qand
P := \sum_{v \in V'} T_v.
\]
For $\tau \in \Tr(A)$ with $\tau(P) \neq 0$ set $\tau_P(a) := \tau(P)^{-1} \tau(P a P)$ for every $a \in A$; if $\tau(P) = 0$ then set $\tau_P = 0$.
Notice that the definition of $V'$ yields that the tracial entropy of $\tau_P$ is less or equal than $\log \la$.
Thus we get
\[
c_{\tau_P, \be}^{\{1, \dots, N\}} < \infty \foral \tau \in \Tr(A).
\]
Therefore we have
\begin{align*}
\sum \big\{ e^{-|\umu| \be} \tau(T_{\umu}^* a T_{\umu}) \mid s(\umu) \in V', 0 \leq |\umu| \leq k \big\}
& =
\sum \big\{ e^{-|\umu| \be} \tau(P T_{\umu}^* a T_{\umu} P) \mid 0 \leq |\umu| \leq k \big\} \\
& \leq
c_{\tau_P, \be}^{\{1, \dots, N\}} \cdot \nor{a}
< \infty,
\end{align*}
for all $\tau \in \Tr(A)$.
Hence an application of the Banach-Steinhaus Theorem gives that the following limit exists in $A$:
\[
\lim_m \sum_{k=0}^m e^{-k\be} \sum_{|\umu| = k, s(\umu) \in V'} T_{\umu}^* a T_{\umu} = \sum_{k=0}^\infty e^{-k\be} \sum_{|\umu| = k, s(\umu) \in V'} T_{\umu}^* a T_{\umu} \in A.
\]
If $\tau_j \stackrel{\textup{w*}}{\longrightarrow} \tau$ in $\Tr_\be^{\fty}(A)$ then the Monotone Convergence Theorem yields $c_{\tau_j, \be}^{\{1, \dots, N\}} \longrightarrow c_{\tau, \be}^{\{1, \dots, N\}}$ and subsequently that the inverse of $\Phi^{\{1, \dots, N\}}$ is weak*-continous.
For a general $F$ first recall that there is a weak*-continuous map extending $\tau$ to $\wt\tau$ on the $F^c$-part of $\N\T(\La)$.
Applying the previous argument on $\wt\tau$ for the corresponding $F$ yields that the inverse of $\Phi^F$ is weak*-continuous.
\end{proof}

%%%%%%%%%%%%%%%%%%%%%%%%%%%%%%%%%
\begin{remark}\label{R:descends}
Let us see how the parametrizations descend to $\N\O(\La)$.
First of all the infinite-type KMS-simplex descends as is to $\N\O(\La)$.
For $\mt \neq F \neq \{1, \dots, N\}$ we have that
\[
\GEq_{\be}^F(\N\O(\La)) \simeq \big\{\tau \in \Tr_{\be}^F(A) \mid \supp \tau \subseteq \{v \mid \text{ $v$ is not $F$-tracing}\} \big\}
\]
for all positive $\be$.
By definition a vertex $v$ is not $F$-tracing if either it is an $F$-source itself or it is $F^c$-communicated by an $F$-source.
In particular we have that a vertex $v$ is not $\{1, \dots, N\}$-tracing if and only if it is a source, and so 
\[
\GEq_\be^{\fty}(\N\O(\La)) \simeq \sca{v \in V \mid v \textup{ is a source}, \rho\left(\overleftarrow{\La}(v)\right) \leq \la}
\]
for all positive $\be \in (\log \la, \log \la']$, when $\la < \la'$ are successive in $\fS$.
\end{remark}

We can use Proposition \ref{P:strong entropy} and Proposition \ref{P:F chara} for computing the $F$-tracial entropy of a state $\tau$ as follows.

%%%%%%%%%%%%%%%%%%%%%%%%%%%%%%%%%
\begin{remark}\label{R:tr com}
Suppose that $\tau$ forwards $F$-communicates with the components $\La^{(i)}_{j_1}, \dots, \La^{(i)}_{j_{k_i}}$ of $\La^{(i)}$ for $i \in F$.
Then we have that
\[
h_{\La}^{\tau, F} = h_{H}^{s, F} = \max \{ \log \rho(\La^{(i)}_{j}) \mid j=j_1, \dots, j_{k_i}, i \in F\}
\]
for the sink subgraph $H := \overleftarrow{\La}(\supp \tau; F)$.
\end{remark}

Moreover we can follow the next steps for computing the gauge-invariant KMS-simplices for a higher-rank graph $\La$.
To do so it suffices to compute the possible $\Tr_\be^F(A)$.

%%%%%%%%%%%%%%%%%%%%%%%%%%%%%%%%%
\begin{remark}
In what follows we denote by $E(\La^{(i)}, \la_i)$ the nonnegative eigencone of $\La^{(i)}$ at $0 \leq \la_i$, and by $\sca{\S}$ the $\ell^1$-simplex of $\S \subseteq \bR^n_+$.

\medskip

\noindent
{\bf Step I.} Compute the sink subgraphs $H$ of $\La$ and their Perron-Frobenius eigenvalues $\rho(H)$.
By Theorem \ref{T:main gi} these will be the phase transitions for $\N\T(\La)$.

\smallskip

\noindent
{\bf Step II.} Let $\la := \rho(\La)$.
Find the minimal $F \subseteq \{1, \dots, N\}$ such that
\[
\bigcap_{j \in F^c} E(\La^{(j)}, \la) \neq \mt.
\]

\smallskip

\noindent
{\bf Step III.} Fix such an $F$ and set
\[
V^F \equiv \supp \left(\bigcap_{j \in F^c} E(\La^{(j)}, \la)\right) := \{v \in \supp \tau \mid \tau \in \bigcap_{j \in F^c} E(\La^{(j)}, \la) \}.
\]
For every $v \in V^F$ compute its $F$-entropy by using Remark \ref{R:tr com} and set
\[
V_\la^F := \{v \in V^F \mid h_\La^{\de_v, F} < \log \la \}.
\]
Notice here that Proposition \ref{P:max phase} guarantees that there exists at least one $F \subsetneq \{1, \dots, N\}$ and $0 \leq \la_i < \la$ with $i \in F$ so that
\[
\left[ \bigcap_{j \in F^c} E(\La^{(j)}, \la) \right] \cap \left[ \bigcap_{i \in F} E(\La^{(i)}, \la_i) \right] \neq \mt.
\]
For that $F$ and for $v$ in the support of a common nonnegative eigenvector in the above set we have that $h_{\La}^{\de_v, F} \leq \max_{i \in F} \log \la_i <\log \la$.
Therefore there exists an $F$ for which $V_\la^F \neq \mt$ and in particular
\[
\bigcap_{j \in F^c} E(\La^{(j)}, \la) \cap \{\tau \mid \supp \tau \subseteq V_\la^F \} 
\supseteq 
\left[ \bigcap_{j \in F^c} E(\La^{(j)}, \la) \right] \cap \left[ \bigcap_{i \in F} E(\La^{(i)}, \la_i) \right]
\neq \mt.
\]

\smallskip

\noindent
{\bf Step IV.} If $F \neq \mt$ then we get 
\[
\GEq_{\log{\la}}^F(\N\T(\La)) \simeq \Tr_{\log \la}^F(A) = \sca{ \bigcap_{j \in F^c} E(\La^{(j)}, \la) \cap \{\tau \mid \supp \tau \subseteq V_\la^F \}}.
\]
If $F = \mt$ then we get
\[
\GEq_{\log \la}^\infty \simeq \Avt_{\log \la}(A) = \sca{ \bigcap_{j =1}^N E(\La^{(j)}, \la)}.
\]

\smallskip

\noindent
{\bf Step V.} Repeat for all sink subgraphs $H$ of $\La$ to obtain all possible $\Tr_\be^F(A)$ for $\be = \log \rho(H)$.

\smallskip

\noindent
{\bf Step VI.} Use Remark \ref{R:tr com} to compute the tracial entropy of each $\de_v$.
Alternatively compute $\rho\left(\overleftarrow{\La}(v)\right)$.
For $\be \in (\log \la, \log \la']$ with $\la < \la'$ in $\{\rho(H) \mid \text{ $H$ is a sink subgraph of $\La$}\}$ we have
\[
\GEq_{\be}^{\fty}(\N\T(\La)) \simeq \Tr_\be^{\fty}(A) = \sca{\de_v \mid h_\La^{\de_v} \leq \log \la} = \sca{\de_v \mid \rho\left(\overleftarrow{\La}(v)\right) \leq \la}.
\]

\smallskip

\noindent
{\bf Step VII.} 
Each $F$-simplex descends to $\N\O(\La)$ by excluding the traces that are supported on $F$-tracing vertices.
\end{remark}

%%%%%%%%%%%%%%%%%%%%%%%%%%%%%%%%
\begin{remark}\label{R:Chr}
We can further get a form of the gauge-invariant KMS-states similar to the one obtained by Christensen in \cite[Proposition 5.4]{Chr18}.
For large $\be >0$ where only the finite-type part survives this gives a reformulation of the finite-type parametrizations by an Huef-Laca-Raeburn-Sims \cite[Theorem 6.1]{HLRS14}.

Let $\La = (\La^{(1)}, \dots, \La^{(N)}; \sim)$ be a higher-rank graph and $\be>0$.
For $F = \mt$ we have shown that the map
\[
\Psi \colon \GEq_\be^{\infty}(\N\T(\La)) \to \Avt_\be(A)
\]
is a weak*-homeomorphism such that
\[
\Psi^{-1}(\tau)(T_{\un{\la}} T_{\un{\la}'}^*) = \de_{\un{\la}, \un{\la}'} e^{-|\un{\la}|\be}\tau(T_{s(\un{\la})}).
\]

Next we consider the case of $F \neq \mt$.
Suppose first that $F = \{1, \dots, N\}$ and let $0 < \be \in (\log \la, \log \la']$ for $\la, \la' \in \fS$.
Let the higher-rank graph
\[
H := \overleftarrow{\La}(V')
\qfor
V' := \{v \in V \mid \rho(\overleftarrow{\La}(v)) \leq \la\}.
\]
Consequently $\rho(H^{(i)}) \leq \la < e^{\be}$ for all $i = 1, \dots, N$.
Thus we can invoke \cite[Lemma 2.2]{HLRS14} to obtain that the series $\sum_{\un{n} \in \bZ_+^N} e^{-|\un{n}| \be} H^{(\un{n})}$ converges in norm to $\prod_{i = 1}^N (1 - e^{-\be} H^{(i)})^{-1}$.
Let $\tau \in \Tr_\be^{\fty}(A)$ so that $\supp \tau$ is contained in $H$.
It then follows that any path $\umu \in \La$ with $r(\umu) = w \notin H$ must also have $s(\umu) \notin H$.
Consequently we obtain
\begin{align*}
\sum_{v' \in V} \big[ \La^{(\un{n})} \big]_{wv'} \tau(T_{v'})
=
\big[ \La^{(\un{n})} \tau \big] (T_w)
=
\begin{cases}
\big[H^{(\un{n})} \tau|_{H} \big] (T_w) & \text{if } w \in H, \\
0 & \text{if } w \notin H.
\end{cases}
\end{align*}
Therefore for $\vphi_\tau = (\Phi^{\fty})^{-1}(\tau)$ we obtain
\begin{align*}
\vphi_\tau(T_{\un{\la}} T_{\un{\la}'}^*)
& =
\de_{\un{\la}, \un{\la}'} e^{-|\un{\la}| \be} \vphi_{\tau}(T_{s(\un{\la})}) 
 =
\de_{\un{\la}, \un{\la}'} e^{-|\un{\la}| \be} (c_{\tau, \be}^{\{1, \dots, N\}})^{-1}  \sum_{\ell(\umu) \in \bZ_+^N} e^{- |\umu| \be} \tau(T_{\umu}^* T_{s(\un{\la})} T_{\umu}) \\
& =
\de_{\un{\la}, \un{\la}'} e^{-|\un{\la}| \be} (c_{\tau, \be}^{\{1, \dots, N\}})^{-1}  \sum_{\un{n} \in \bZ_+^N} e^{- |\un{n}| \be} \big[ \La^{(\un{n})} \tau \big] (T_{s(\un{\la})}) \\
& =
\begin{cases}
\de_{\un{\la}, \un{\la}'} e^{-|\un{\la}| \be} (c_{\tau, \be}^{\{1, \dots, N\}})^{-1} \big[\prod_{i=1}^N (1 - e^{-\be} H^{(i)})^{-1} \tau|_{H} \big](T_{s(\un{\la})})
& \text{if } s(\un{\la}) \in H, \\
0 & \text{if } s(\un{\la}) \notin H.
\end{cases}
\end{align*}

Now let $\mt \neq F \neq \{1, \dots, N\}$ and a $\tau \in \Tr_\be^F(A)$ for $\be \in \fS$.
Then $\be = \log \la$ for some sink subgraph $G$ of $\La$ with $\rho(G) = \la$ and we can write
\[
\La^{(\un{n})} = \begin{bmatrix} \ast & 0 \\ \ast & G^{(\un{n})} \end{bmatrix}
\foral \un{n} \in \bZ_+^N.
\]
We have seen that $\supp \tau \subseteq G$ and in this case let 
\[
H_\tau:= \overleftarrow{\La}(\supp \tau; F)
\]
which is an $F$-subgraph of $G$ and thus of $\La$.
By Proposition \ref{P:F chara} we have that $\rho(H^{(i)}_\tau) < \la = e^\be$ for $i \in F$, thus the series $\sum_{\un{n} \in F} e^{-|\un{n}| \be} H^{(\un{n})}_\tau$ converges in norm to $\prod_{i \in F} (1 - e^{-\be} H^{(i)}_\tau)^{-1}$.
By the sink subgraph property of $H_\tau$ and that $\supp \tau \subseteq H_\tau$, we now get for $\un{n}$ with $\ell(\un{n}) \in F$ that
\begin{align*}
\sum_{v' \in V} \big[ \La^{(\un{n})} \big]_{wv'} \tau(T_{v'})
=
\begin{cases}
\big[H^{(\un{n})}_\tau \tau|_{H_\tau} \big] (T_w) & \text{if } w \in H, \\
0 & \text{if } w \notin H.
\end{cases}
\end{align*}
Hence a computation as before for $(\Phi^F)^{-1}(\tau) \equiv \vphi_\tau$ and $H_\tau$ gives that
\begin{align*}
\vphi_\tau(T_{\un{\la}} T_{\un{\la}'}^*)
& =
\begin{cases}
\de_{\un{\la}, \un{\la}'} e^{-|\un{\la}| \be} (c_{\tau, \be}^{F})^{-1} \big[\prod_{i \in F} (1 - e^{-\be} H^{(i)}_\tau)^{-1} \tau|_{H_\tau} \big](T_{s(\un{\la})})
& \text{if } s(\un{\la}) \in H_\tau, \\
0 & \text{if } s(\un{\la}) \notin H_\tau.
\end{cases}
\end{align*}
\end{remark}

%%%%%%%%%%%%%%%%%%%%%%%%%%%%%%%%%
\section{Examples}
%%%%%%%%%%%%%%%%%%%%%%%%%%%%%%%%%

In this section we provide some examples for which we compute the KMS-simplices.
Let us start with the irreducible case that is considered in \cite[Section 7]{HLRS14}.
Unlike to \cite{HLRS14} we will not be weighting the rotational action.
We will revisit this in the next section.

%%%%%%%%%%%%%%%%%%%%%%%%%%%%%%%%%
\begin{example}
Let $\La = (\La^{(1)}, \dots, \La^{(N)}; \sim)$ be a higher-rank graph such that every $\La^{(i)}$ is irreducible.
Then there is only one sink subgraph, the entire $\La$.
Let $\la = \rho(\La)$ be achieved at some component, say at $\La^{(1)}$, and let $\tau$ be its Perron-Frobenius eigenvector.
By commutativity and the Perron-Frobenius Theorem we have that $\tau$ is an eigenvector for every $\La^{(i)}$ with
\[
\La^{(i)} \tau = \rho(\La^{(i)}) \tau \foral i=1, \dots, N.
\]
On the other hand all vertices are connected in every $\La^{(i)}$ and thus the entropy of every $\de_v$ equals $\log \la$.
Therefore, by setting
\[
F:= \big\{i \in \{1, \dots, N\} \mid \rho(\La^{(i)}) < \la \big\},
\]
we get
\[
\GEq_{\log \la}(\N\T(\La)) = \GEq_{\log \la}^F(\N\T(\La)) \simeq \{ \tau \}
\]
and that
\[
\Eq_\be(\N\T(\La)) = \Eq_\be^{\fty}(\N\T(\La)) \simeq \sca{\de_v \mid v \in V}
\foral \be > \log \la.
\]
The KMS-simplex of $\N\T(\La)$ is empty for all $\be < \log \la$.

On the other hand we see that every vertex is $F$-tracing for all $F$ as the components are irreducible for every direction.
Thus only the inifnite-type KMS-simplex at $\log \la$ can descend to $\N\O(\La)$.
In particular this is non-empty if and only if $\rho(\La^{(i)}) = \la$ for all $i \in \{1, \dots, N\}$.
\end{example}

Next we study a family of rank-2 graphs considered by Kumjian-Pask \cite{KP00}.
Before we give the complete characterization for those, let us work out two specific examples.

%%%%%%%%%%%%%%%%%%%%%%%%%%%%%%%%%
\begin{example}
Let the following 2-coloured graph:
\[
\xymatrix@R=20pt@C=50pt{
\bullet_{v_2} \ar@{->}_{(4)}@(ur,ul) \ar@{->}^{(2)}@/^1pc/[rr] \ar@{..>}^{(3)}@(dr,dl) \ar@{..>}_{(1)}@/_1pc/[rr] & & 
\bullet_{v_3} \ar@{->}_{(2)}@(ur,ul) \ar@{..>}^{(2)}@(dr,dl) & & 
\bullet_{v_1} \ar@{->}_{(5)}@(ur,ul) \ar@{->}_{(3)}@/_1pc/[ll] \ar@{..>}^{(4)}@(dr,dl) \ar@{..>}^{(2)}@/^1pc/[ll]
}
\]
We can write the adjacency matrices with respect to the same order $(v_1, v_2, v_3)$ as
\[
\La^{(1)}
=
\begin{bmatrix}
5 & 0 & 0 \\
0 & 4 & 0 \\
3 & 2 & 2
\end{bmatrix}
\qand
\La^{(2)}
=
\begin{bmatrix}
4 & 0 & 0 \\
0 & 3 & 0 \\
2 & 1 & 2
\end{bmatrix}.
\]
We see that the matrices commute and so they define a rank-2 graph.
We will first analyze the possible sink subgraphs.

\smallskip

\noindent
Step 1. We see that $\rho(\La) = 5$ and $\La^{(1)}$ has an eigenvector
\[
\tau_1 = \frac{1}{2} ( \de_{v_1} + \de_{v_3} )
\]
at $5$.
By checking the support of $\tau_1$ we see that its $\{2\}$-entropy is $\log 4$.
Hence $\tau_1$ contributes one trace in $\Tr_{\log 5}^{\{2\}}(A)$.

\smallskip

\noindent
Step 2. Consider the sink subgraph $H$ given by
\[
\xymatrix@R=20pt@C=50pt{
& & 
\bullet_{v_3} \ar@{->}_{(2)}@(ur,ul) \ar@{..>}^{(2)}@(dr,dl) & & 
\bullet_{v_1} \ar@{->}_{(5)}@(ur,ul) \ar@{->}_{(3)}@/_1pc/[ll] \ar@{..>}^{(4)}@(dr,dl) \ar@{..>}^{(2)}@/^1pc/[ll]
}
\]
with adjacency matrices
\[
H^{(1)}
=
\begin{bmatrix}
5 & 0 \\
3 & 2
\end{bmatrix}
\qand
H^{(2)}
=
\begin{bmatrix}
4 & 0 \\
2 & 2
\end{bmatrix}.
\]
We see that $\rho(H) = 5$ and $\tau_1$ from above defines an eigenvector of $H^{(1)}$.
So we do not get new traces in $\Tr_{\log 5}^{F}(A)$ in this case.

\smallskip

\noindent
Step 3. Consider the sink subgraph $H$ given by
\[
\xymatrix@R=20pt@C=50pt{
\bullet_{v_2} \ar@{->}_{(4)}@(ur,ul) \ar@{->}^{(2)}@/^1pc/[rr] \ar@{..>}^{(3)}@(dr,dl) \ar@{..>}_{(1)}@/_1pc/[rr] & & 
\bullet_{v_3} \ar@{->}_{(2)}@(ur,ul) \ar@{..>}^{(2)}@(dr,dl) & & 
}
\]
with adjacency matrices
\[
H^{(1)}
=
\begin{bmatrix}
4 & 0 \\
2 & 2
\end{bmatrix}
\qand
H^{(2)}
=
\begin{bmatrix}
3 & 0 \\
1 & 2
\end{bmatrix}.
\]
We see that $\rho(H) = 4$ and that $H^{(1)}$ has an eigenvector
\[
\tau_2 := \frac{1}{2} (\de_{v_2} + \de_{v_3})
\]
at $4$.
By checking its support we see that it has $\{2\}$-entropy equal to $\log 3$.
Therefore it contributes one trace in $\Tr_{\log 4}^{\{2\}}(A)$.

\smallskip

\noindent
Step 4. Consider the sink subgraph $H = \overleftarrow{\La}(v_3)$ with adjacency matrices $H^{(1)} = H^{(2)} = \begin{bmatrix} 2 \end{bmatrix}$.
We see that $\rho(H) = 2$ and that $\tau_3 := \de_{v_3}$ is an eigenvector for both $H^{(1)}$ and $H^{(2)}$ at $2$.
Thus it contributes one trace in $\Avt_{\log 2}(A)$.

\smallskip

\noindent
Step 5. We have no other sink subgraphs and now we proceed in computing the entropy of the vertices.
Those will contribute to the $\Tr_\be^{\fty}(A)$.
We see that $\de_{v_1}$ is connected to the irreducible components $[5]$ and $[2]$ of $\La^{(1)}$ and to the irreducible components $[4]$ and $[2]$ of $\La^{(2)}$, and so its entropy is $\log 5$.
Likewise $\de_{v_2}$ and $\de_{v_3}$ have entropy $\log 4$ and $\log 2$, respectively.

\smallskip

Putting all together we see that we have phase transitions at $\log 2, \log 4$ and $\log 5$.
Therefore we have the following cases for $\be \in \bR^+$:

\noindent
$\bullet$ If $\be \in (\log 5, +\infty)$ then
\[
\GEq_\be(\N\T(\La)) = \GEq_\be^{\fty}(\N\T(\La)) \simeq \sca{\de_{v_1}, \de_{v_2}, \de_{v_3}}.
\]
$\bullet$ If $\be = \log 5$ then
\[
\GEq_{\log 5}(\N\T(\La)) = \GEq_{\log 5}^{\fty}(\N\T(\La)) \oplus_{\textup{convex}} \GEq_{\log 5}^{\{2\}}(\N\T(\La))
\]
with
\[
\GEq_{\log 5}^{\fty}(\N\T(\La)) \simeq \sca{\de_{v_2}, \de_{v_3}}
\qand
\GEq_{\log 5}^{\{2\}}(\N\T(\La)) = \{\frac{1}{2}(\de_{v_1} + \de_{v_3}) \}.
\]
$\bullet$ If $\be \in (\log 4, \log 5)$ then
\[
\GEq_\be(\N\T(\La)) = \GEq_\be^{\fty}(\N\T(\La)) \simeq \sca{\de_{v_2}, \de_{v_3}}.
\]
$\bullet$ If $\be = \log 4$ then
\[
\GEq_{\log 4}(\N\T(\La)) = \GEq_{\log 4}^{\fty}(\N\T(\La)) \oplus_{\textup{convex}} \GEq_{\log 4}^{\{2\}}(\N\T(\La))
\]
with
\[
\GEq_{\log 4}^{\fty}(\N\T(\La)) \simeq \{\de_{v_3}\}
\qand
\GEq_{\log 4}^{\{2\}}(\N\T(\La)) = \{\frac{1}{2}(\de_{v_2} + \de_{v_3}) \}.
\]
$\bullet$ If $\be \in (\log 2, \log 4)$ then
\[
\GEq_\be(\N\T(\La)) = \GEq_\be^{\fty}(\N\T(\La)) \simeq \{\de_{v_3}\}.
\]
$\bullet$ If $\be = \log 2$ then
\[
\GEq_{\log 2}(\N\T(\La)) = \GEq_{\log 2}^{\infty}(\N\T(\La)) \simeq \{\de_{v_3} \}.
\]
$\bullet$ If $\be \in (0, \log 2)$ then $\GEq_\be(\N\T(\La)) = \mt$.

\smallskip

Moreover we see that the higher-rank graph has no sources at any, or all colours.
Thus the KMS-structure descends to $\N\O(\La)$ only by keeping the infinite type states.
That is
\[
\GEq_{\log 2}(\N\O(\La)) = \GEq_{\log 2}^{\infty}(\N\O(\La)) \simeq \{\de_{v_3}\},
\]
while $\GEq_\be(\N\O(\La)) = \mt$ for all $\be \neq \log 2$.
\end{example}

%%%%%%%%%%%%%%%%%%%%%%%%%%%%%%%%%
\begin{example}
In the next example we remove the cycles at $v_2$ to create a source (but fix the connecting edges between $v_2$ and $v_3$ to have two commuting matrices).
So now let the following rank-2 graph:
\[
\xymatrix@R=20pt@C=50pt{
\bullet_{v_2} \ar@{->}^{(2)}@/^1pc/[rr] \ar@{..>}_{(2)}@/_1pc/[rr] & & 
\bullet_{v_3} \ar@{->}_{(2)}@(ur,ul) \ar@{..>}^{(2)}@(dr,dl) & & 
\bullet_{v_1} \ar@{->}_{(5)}@(ur,ul) \ar@{->}_{(3)}@/_1pc/[ll] \ar@{..>}^{(4)}@(dr,dl) \ar@{..>}^{(2)}@/^1pc/[ll]
}
\]
We can write the adjacency matrices with respect to the same order $(v_1, v_2, v_3)$ as
\[
\La^{(1)}
=
\begin{bmatrix}
5 & 0 & 0 \\
0 & 0 & 0 \\
3 & 2 & 2
\end{bmatrix}
\qand
\La^{(2)}
=
\begin{bmatrix}
4 & 0 & 0 \\
0 & 0 & 0 \\
2 & 2 & 2
\end{bmatrix}.
\]
The analysis is similar as before with the only difference that in considering the subgraph
\[
\xymatrix@R=20pt@C=50pt{
\bullet_{v_2} \ar@{->}^{(2)}@/^1pc/[rr] \ar@{..>}_{(2)}@/_1pc/[rr] & & 
\bullet_{v_3} \ar@{->}_{(2)}@(ur,ul) \ar@{..>}^{(2)}@(dr,dl) & & 
}
\]
we just get a common eigenvector $\de_{v_3}$ at $2$.
Also notice that we just have $\log 5$ and $\log 2$ as phase transitions, while the entropy of $v_2$ is now $\log 2$.
Hence for $\be \in \bR^+$ we get:

\noindent
$\bullet$ If $\be \in (\log 5, +\infty)$ then
\[
\GEq_\be(\N\T(\La)) = \GEq_\be^{\fty}(\N\T(\La)) \simeq \sca{\de_{v_1}, \de_{v_2}, \de_{v_3}}.
\]
$\bullet$ If $\be = \log 5$ then
\[
\GEq_{\log 5}(\N\T(\La)) = \GEq_{\log 5}^{\fty}(\N\T(\La)) \oplus_{\textup{convex}} \GEq_{\log 5}^{\{2\}}(\N\T(\La))
\]
with
\[
\GEq_{\log 5}^{\fty}(\N\T(\La)) \simeq \sca{\de_{v_2}, \de_{v_3}}
\qand
\GEq_{\log 5}^{\{2\}}(\N\T(\La)) = \{\frac{1}{2}(\de_{v_1} + \de_{v_3}) \}.
\]
$\bullet$ If $\be \in (\log 2, \log 5)$ then
\[
\GEq_\be(\N\T(\La)) = \GEq_\be^{\fty}(\N\T(\La)) \simeq \sca{\de_{v_2}, \de_{v_3}}.
\]
$\bullet$ If $\be = \log 2$ then
\[
\GEq_{\log 2}(\N\T(\La)) = \GEq_{\log 2}^{\infty}(\N\T(\La)) \simeq \{\de_{v_3} \}.
\]
$\bullet$ If $\be \in (0, \log 2)$ then $\GEq_\be(\N\T(\La)) = \mt$.

In order to get the required simplices for $\N\O(\La)$ we need to check which vertices are $F$-tracing for $F \subseteq \{1, \dots, N\}$, and exclude the traces supported on those.
As only the finite part and the $\{2\}$-part contribute we just need to do so for $F = \{1, 2\}$ and $F = \{2\}$.
For $F = \{1, 2\}$ we see that only $v_2$ is not tracing (being a source).
Hence only $\de_{v_2}$ will pass through from the finite part.
For $F = \{2\}$ we see that $v_2$ is a $\{2\}$-source and $v_3$ is $\{1\}$-connected to that $\{2\}$-source.
So only $v_1$ is $\{2\}$-tracing.
However $\frac{1}{2}(\de_{v_1} + \de_{v_3})$ is not zero on $T_{v_1}$, and so it will not pass through from the $\{2\}$-part.
In particular we lose the phase transition at $\log 5$.
Therefore we have:

\noindent
$\bullet$ If $\be \in (\log 2, +\infty)$ then
\[
\GEq_\be(\N\O(\La)) = \GEq_\be^{\fty}(\N\O(\La)) \simeq \{\de_{v_2}\}.
\]
The induced finite-type KMS-state can be constructed by statistical approximations on the Cunt-Krieger representation induced by the paths starting at $v_2$.

\noindent
$\bullet$ If $\be = \log 2$ then
\[
\GEq_{\log 2}(\N\O(\La)) = \GEq_{\log 2}^{\infty}(\N\O(\La)) \simeq \{\de_{v_3} \}.
\]
$\bullet$ If $\be \in (0, \log 2)$ then $\GEq_\be(\N\O(\La)) = \mt$.
\end{example}

%%%%%%%%%%%%%%%%%%%%%%%%%%%%%%%%
\begin{example}
More generally, Kumjian-Pask \cite{KP00} have shown that the following two-coloured graph
\begin{equation*}
\xymatrix@R=20pt@C=50pt{
\bullet_{v_2} \ar@{->}_{(m_1)}@(ur,ul) \ar@{->}^{(p_1)}@/^1pc/[rr] \ar@{..>}^{(m_2)}@(dr,dl) \ar@{..>}_{(p_2)}@/_1pc/[rr] & & 
\bullet_{v_3} \ar@{->}_{(l_1)}@(ur,ul) \ar@{..>}^{(l_2)}@(dr,dl) & & 
\bullet_{v_1} \ar@{->}_{(n_1)}@(ur,ul) \ar@{->}_{(q_1)}@/_1pc/[ll] \ar@{..>}^{(n_2)}@(dr,dl) \ar@{..>}^{(q_2)}@/^1pc/[ll]
}
\end{equation*}
defines a rank-2 structure if and only if the adjacency matrices
\[
\La^{(1)}
=
\begin{bmatrix}
n_1 & 0 & 0 \\
0 & m_1 & 0 \\
q_1 & p_1 & l_1
\end{bmatrix}
\qand
\La^{(2)}
=
\begin{bmatrix}
n_2 & 0 & 0 \\
0 & m_2 & 0 \\
q_2 & p_2 & l_2
\end{bmatrix}
\]
commute.
Let us now describe the phase transitions for the possible values that give such a rank-2 graph.
We begin with the following remark.
Let the matrix
\[
A =
\begin{bmatrix}
n & 0 & 0 \\
0 & m & 0 \\
p & q & l
\end{bmatrix}.
\]
If $n \geq m > l$ then $A$ has nonnegative unimodular eigenvectors
\[
w_n = \frac{n-l}{n-l+p} \begin{bmatrix} 1 \\ 0 \\ p/(n-l) \end{bmatrix}, \;
w_m = \frac{m-l}{m-l+q} \begin{bmatrix} 0 \\ 1 \\ q/(m-l) \end{bmatrix}, \;
w_l = \begin{bmatrix} 0 \\ 0 \\ 1 \end{bmatrix},
\]
at $n,m,l$ respectively.
If $n > l \geq m$ then $A$ has nonnegative unimodular eigenvectors
\[
w_n = \frac{n-l}{n-l+p} \begin{bmatrix} 1 \\ 0 \\ p/(n-l) \end{bmatrix}
\qand
w_l = \begin{bmatrix} 0 \\ 0 \\ 1 \end{bmatrix},
\]
at $n, l$ respectively.
Finally if $l > m, n$ then $A$ has $w_l$ as a single nonnegative unimodular eigenvector at $l$.
Symmetry over $n,m$ gives the other cases as well.
Now let us return to the rank-2 graph and see how the possible order of $n,m,l$ affects the gauge-invariant KMS-simplices for
\[
n:= \max\{n_1, n_2\}, \; m:= \max\{m_1, m_2\}, \; l := \max\{l_1, l_2\}.
\]

We consider the case where $n_i, m_i, l_i >0$.
We leave it as an exercise to the reader to check what happens when some of them are zero.

\smallskip

\noindent
{\bf Case 1.} Suppose that $n > m > l$.
Then we have three phase transitions at $\log l$, $\log m$ and $\log n$.
The finite-type simplex for $\La$ becomes
\[
\GEq_\be^{\fty}(\N\T(\La))
\simeq
\begin{cases}
\sca{\de_{v_1}, \de_{v_2}, \de_{v_3}} & \text{ if } \be \in (\log n, +\infty), \\[5pt]
\sca{\de_{v_2}, \de_{v_3}} & \text{ if } \be \in (\log m, \log n], \\[5pt]
\{\de_{v_3}\} & \text{ if } \be \in (\log l, \log m].
\end{cases}
\]
We have a single subharmonic part at each $\log n, \log m, \log l$ induced by the unimodular eigenvectors $w_n, w_m ,w_l$.
The type depends on how we achieve the maximums $n, m, l$, i.e.,
\[
\{w_n\} 
\simeq
\begin{cases}
\GEq_{\log n}^{\{1\}}(\N\T(\La)) & \text{ if } n_1 < n_2, \\[8pt]
\GEq_{\log n}^{\{2\}}(\N\T(\La)) & \text{ if } n_1 > n_2, \\[8pt]
\GEq_{\log n}^{\infty}(\N\T(\La)) & \text{ if } n_1 = n_2.
\end{cases}
\] 
Likewise for $w_m$ and $w_l$ at $\log m$ and $\log l$.

\smallskip

\noindent
{\bf Case 2.} Suppose that $n = m > l$.
Then we have two phase transitions at $\log n$ and $\log l$.
The finite-type simplex for $\La$ becomes
\[
\GEq_\be^{\fty}(\N\T(\La))
\simeq
\begin{cases}
\sca{\de_{v_1}, \de_{v_2}, \de_{v_3}} & \text{ if } \be \in (\log n, +\infty), \\[5pt]
\{\de_{v_3}\} & \text{ if } \be \in (\log l, \log n].
\end{cases}
\]
The $w_n$ and $w_m$ each induce a subharmonic part whose type depends on how $n,m$ are achieved, as in Case 1.
For example we have
\[
\GEq_{\log n}^{\infty}(\N\T(\La)) \simeq \sca{w_n, w_m} \qif n_1 = n_2, m_1 = m_2.
\]
On the other hand $w_l$ defines a single subharmonic part at $\log l$.

\smallskip

\noindent
{\bf Case 3.} Suppose that $n > l \geq m$.
Then we have two phase transitions at $\log n$ and $\log l$.
The finite simplex for $\La$ becomes
\[
\GEq_\be^{\fty}(\N\T(\La))
\simeq
\begin{cases}
\sca{\de_{v_1}, \de_{v_2}, \de_{v_3}} & \text{ if } \be \in (\log n, +\infty), \\[5pt]
\sca{\de_{v_2}, \de_{v_3}} & \text{ if } \be \in (\log l, \log n]. 
\end{cases}
\]
Now $w_n$ and $w_l$ each induce a single subharmonic part at $\log n$ and $\log l$ whose type depends on how we achieve the maximums, as in Case 1.

\smallskip

\noindent
{\bf Case 4.} Suppose that $l \geq n, m$.
Then we have a single phase transition at $\log l$ so that
\[
\Eq_\be(\N\T(\La)) = \Eq_\be^{\fty}(\N\T(\La)) \simeq \sca{\de_{v_1}, \de_{v_2}, \de_{v_3}}
\foral \be \in (\log l, +\infty),
\]
while $w_l$ induces a single KMS-state at $\log l$ in the sense that
\[
\{\de_{v_3}\} 
\simeq
\GEq_{\log l}(\N\T(\La))
=
\begin{cases}
\GEq_{\log l}^{\{1\}}(\N\T(\La)) & \text{ if } l_1 < l_2, \\[8pt]
\GEq_{\log l}^{\{2\}}(\N\T(\La)) & \text{ if } l_1 > l_2, \\[8pt]
\GEq_{\log l}^{\infty}(\N\T(\La)) & \text{ if } l_1 = l_2.
\end{cases}
\] 

\smallskip

We have no sources of any coulour, and so $\N\O(\La)$ admits the KMS-states that are just of infinite type, when those exist.
\end{example}

The finite-type KMS-states of the next example has been worked out also by an Huef-Raeburn in \cite{HR19} (which appeared on the arXiv while the current paper was in submission).
Apart from \cite{Kak18}, the structure of the finite-type states has been verified also by Christensen \cite{Chr18} for higher-rank graphs, and by Afsar-Larsen-Neshveyev \cite{ALN18} for rather general product systems.

%%%%%%%%%%%%%%%%%%%%%%%%%%%%%%%%%
\begin{example}
Let the following 2-coloured graph:
\[
\xymatrix@R=20pt@C=50pt{
& & \bullet_{v_2} \ar@{->}_{(g_1)}@/_1pc/[lld] \ar@{..>}^{(g_2)}@/^1pc/[lld] \\
\bullet_{v_3} \ar@{->}_{(h_1)}@(ur,ul) \ar@{..>}^{(h_2)}@(dr,dl) & & \\
& & \bullet_{v_1} \ar@{..>}^{(f_2)}@/^1pc/[llu] \ar@{->}_{(f_1)}[uu]
}
\]
with adjacency matrices
\[
\La^{(1)}
=
\begin{bmatrix}
0 & 0 & 0 \\
f_1 & 0 & 0 \\
0 & g_1 & h_1
\end{bmatrix}
\qand
\La^{(2)}
=
\begin{bmatrix}
0 & 0 & 0 \\
0 & 0 & 0 \\
f_2 & g_2 & h_2
\end{bmatrix}.
\]
The 2-coloured graph induces the skeleton of a rank-2 graph if and only if the matrices commute, as in this case we can induce a pairing between two-coloured paths, i.e., if and only if $h_1 f_2 = g_2 f_1$ and $h_1 g_2 = h_2 g_1$.
In this case we see that we have one phase transition at 
\[
\log h := \max\{ \log h_1, \log h_2\}.
\]
Therefore we have that
\[
\Eq_\be(\N\T(\La)) = \Eq_\be^{\fty}(\N\T(\La)) \simeq \sca{\de_{v_1}, \de_{v_2}, \de_{v_3}}
\foral \be \in (\log h, +\infty).
\]
On the other hand $\de_{v_3}$ is the common eigenvector and induces a single subharmonic part at $\log h$.
Since its $\{i\}$-entropy equals $\log h_i$ the type each time depends on the relation between $h_1$ and $h_2$, i.e.,
\[
\{\de_{v_3}\} 
\simeq 
\GEq_{\log h}(\N\T(\La))
=
\begin{cases}
\GEq_{\log h}^{\{1\}}(\N\T(\La)) & \text{ if } h_1 < h_2, \\[8pt]
\GEq_{\log h}^{\{2\}}(\N\T(\La)) & \text{ if } h_1 > h_2, \\[8pt]
\GEq_{\log h}^{\infty}(\N\T(\La)) & \text{ if } h_1 = h_2.
\end{cases}
\]
We have one source at $v_1$ and therefore the finite simplex descends to $\N\O(\La)$ as
\[
\Eq_\be(\N\O(\La)) = \Eq_\be^{\fty}(\N\O(\La)) \simeq \{\de_{v_1}\}
\foral 
\be \in (\log h, +\infty).
\]
Alternatively, the induced KMS-state can be constructed by statistical approximations on the Cuntz-Krieger representation given by the paths starting at $v_1$.
On the other hand $v_3$ is $\{i\}$-communicated by the source $v_1$ and so it is not $F$-tracing for all $F = \{i\}$.
Hence the gauge-invariant KMS-simplex at $\log h$ descends as is to $\N\O(\La)$.
\end{example}

%%%%%%%%%%%%%%%%%%%%%%%%%%%%%%%%%
\begin{example}
Let the following 2-coloured graph:
\[
\xymatrix@R=50pt@C=50pt{
& & \bullet_{v_3} \ar@{->}_{(g_1)}@/_1pc/[lld] \ar@{..>}^{(g_2)}@/^1pc/[lld] & & \\
\bullet_{v_4} \ar@{->}_{(h_1)}@(ur,ul) \ar@{..>}^{(h_2)}@(dr,dl) & & & & \bullet_{v_1} \ar@{->}_{(a_1)}@(ur,ul) \ar@{..>}^{(a_2)}@(dr,dl) \ar@{->}_{(c_1)}@/_1pc/[llu] \ar@{..>}^{(c_2)}@/^1pc/[llu]  \ar@{->}_{(b_1)}@/_1pc/[lld] \ar@{..>}^{(b_2)}@/^1pc/[lld] \\
& & \bullet_{v_2} \ar@{..>}^{(f_2)}@/^1pc/[llu] \ar@{->}_{(f_1)}[uu] & &
}
\]
with adjacency matrices
\[
\La^{(1)}
=
\begin{bmatrix}
a_1 & 0 & 0 & 0 \\
b_1 & 0 & 0 & 0 \\
c_1 & f_1 & 0 & 0 \\
0 & 0 & g_1 & h_1
\end{bmatrix}
\qand
\La^{(2)}
=
\begin{bmatrix}
a_2 & 0 & 0 & 0 \\
b_2 & 0 & 0 & 0 \\
c_2 & 0 & 0 & 0 \\
0 & f_2 & g_2 & h_2
\end{bmatrix}.
\]
Once more the 2-coloured graph induces the skeleton of a rank-2 graph if and only if the matrices commute.
Here we see that we have possible phase transitions at 
\[
\log a := \max\{ \log a_1, \log a_2 \}
\qand
\log h := \max\{ \log h_1, \log h_2 \}.
\]
We consider the following two cases for $a_i, h_i >0$.
Note that in both cases the trace $\de_{v_4}$ induces a subharmonic part.

\smallskip

\noindent
{\bf Case 1.} Suppose that $a > h$.
By checking the forward communicating components for the vertices we have that the entropy of $v_1$ is $\log a$ and the entropies of the rest is $\log h$.
Thus
\[
\GEq_{\be}^{\fty}(\N\T(\La))
\simeq
\begin{cases}
\sca{\de_{v_1}, \de_{v_2}, \de_{v_3}, \de_{v_4}} & \text{ if } \be \in (\log a, +\infty), \\[8pt]
\sca{\de_{v_2}, \de_{v_3}, \de_{v_4}} & \text{ if } \be \in (\log h, \log a], \\[8pt]
\mt & \text{ if } \be < \log h.
\end{cases}
\]
If $a_i \neq h_i$ then we get the nonnegative eigenvectors for $\La^{(1)}$ and $\La^{(2)}$ at $a_1$ and $a_2$, respectively, by:
\[
w_1 :=
\begin{bmatrix}
1 \\ b_1/a_1 \\ (c_1 + \frac{b_1 f_1}{a_1})/a_1 \\ g_1(c_1 + \frac{b_1 f_1}{a_1})/a_1 (a_1 - h_1)
\end{bmatrix}
\qand
w_2 :=
\begin{bmatrix}
1 \\ b_2/a_2 \\ c_2/a_2 \\ (b_2 f_2 + c_2 g_2)/a_2 (a_2 - h_2)
\end{bmatrix}.
\]
If $a_1 = a_2$ then there is a unique averaging trace $\tau$ induced by the unimodular forms of $w_1$ and $w_2$.
Indeed commutativity gives that $\La^{(2)} w_1$ is in the $\La^{(1)}$-eigencone of $w_1$, and since the first entry of $w_1$ is $1$ we get that $\La^{(2)} w_1 = a_2 w_1$.
As the eigencone of $\La^{(2)}$ at $a_2$ is generated by a unique vector we get that $w_1$ is a multiple of $w_2$, and thus they have the same unimodular form.
If $a_1 > a_2$ then the trace $\tau_1$ induced by $w_1$ will have $\{2\}$-entropy equal to $\log a_2 < \log a$, and likewise if $a_1 < a_2$.
Therefore we have for the non-finite part that
\begin{align*}
\GEq_{\log a}(\N\T(\La)) \setminus \GEq_{\log a}^{\fty}(\N\T(\La)) 
& =
\begin{cases}
\GEq_{\log a}^{\{1\}}(\N\T(\La)) & \text{ if } a_1 < a_2, \\[8pt]
\GEq_{\log a}^{\{2\}}(\N\T(\La)) & \text{ if } a_1 > a_2, \\[8pt]
\GEq_{\log a}^{\infty}(\N\T(\La)) & \text{ if } a_1 = a_2,
\end{cases}
\\
& \simeq
\begin{cases}
\{\tau_2\} & \text{ if } a_1 < a_2, \\[8pt]
\{\tau_1\} & \text{ if } a_1 > a_2, \\[8pt]
\{\tau\} & \text{ if } a_1 = a_2.
\end{cases}
\end{align*}
Moreover we have that $\de_{v_4}$ defines a subharmonic part at $\log h$ and in particular
\[
\{\de_{v_4}\} 
\simeq 
\GEq_{\log h}(\N\T(\La))
=
\begin{cases}
\GEq_{\log h}^{\{1\}}(\N\T(\La)) & \text{ if } h_1 < h_2, \\[8pt]
\GEq_{\log h}^{\{2\}}(\N\T(\La)) & \text{ if } h_1 > h_2, \\[8pt]
\GEq_{\log h}^{\infty}(\N\T(\La)) & \text{ if } h_1 = h_2.
\end{cases}
\]

\smallskip

\noindent
{\bf Case 2.} Suppose that $a \leq h$.
Then we have one phase transition at $\log h$.
Now all vertices have the same entropy $\log h$ and so
\[
\GEq_{\be}^{\fty}(\N\T(\La))
\simeq
\begin{cases}
\sca{\de_{v_1}, \de_{v_2}, \de_{v_3}, \de_{v_4}} & \text{ if } \be \in (\log h, +\infty), \\[8pt]
\mt & \text{ if } \be < \log h.
\end{cases}
\]
We have nine possible cases based on the order of $a_1$ with $a_2$, and of $h_1$ with $h_2$.
In all cases we get that either $a_1 \leq h_1$ or $a_2 \leq h_2$, and the only common nonnegative eigenvector for $\La^{(1)}$ and $\La^{(2)}$ at $\log h$ is then $\de_{v_4}$.
Hence we obtain as before:
\[
\{\de_{v_4}\} 
\simeq 
\GEq_{\log h}(\N\T(\La))
=
\begin{cases}
\GEq_{\log h}^{\{1\}}(\N\T(\La)) & \text{ if } h_1 < h_2, \\[8pt]
\GEq_{\log h}^{\{2\}}(\N\T(\La)) & \text{ if } h_1 > h_2, \\[8pt]
\GEq_{\log h}^{\infty}(\N\T(\La)) & \text{ if } h_1 = h_2.
\end{cases}
\]

\smallskip

In all cases we see that all vertices are $F$-tracing and so only the infinite-type gauge-invariant KMS-simplex descends to $\N\O(\La)$ (whenever it exists).
\end{example}

%%%%%%%%%%%%%%%%%%%%%%%%%%%%%%%%
\section{Weighted dynamics}
%%%%%%%%%%%%%%%%%%%%%%%%%%%%%%%%

As noticed $\N\O(\La)$ can have an empty KMS-simplex, and this may not be desirable.
This can be the case for example when $\La$ has no $F$-sources so that the KMS-simplex consists only of infinite-type states, i.e., of common eigenvectors at the same eigenvalue; and there may be none.
A way around is to apply a scaling on the rotational action.
Indeed, every commuting family $(\La^{(1)}, \dots, \La^{(N)})$ has a common eigenvector $\tau$, i.e., there are $\la_i \geq 0$ such that $\La^{(i)} \tau = \la_i \tau$.
If all $\la_i \neq 0$ then we will see that $\tau$ induces a KMS-state of infinite type at $\be = 1$ for the positively weighted rotational action
\[
\bR \ni r \mapsto \ga_{(e^{ir \la_1}, \dots, e^{ir \la_N})} \in \Aut(\N\T(\La)).
\]
In hindsight one uses appropriate weights to move the numerical radii to a common number so that the $F$-subharmonic parts integrate into the infinite-type simplex for the new action.
Of course one can use different weights to achieve common eigenvalues at the sink subgraphs.
When every $\La^{(i)}$ is irreducible we weight by $\rho(\La^{(i)})$.

By tweaking some bits, our analysis accommodates this setting and allows to compute the KMS-simplices for all (positively weighted) dynamics.
To this end let $s_1, \dots, s_N > 0$ and for every $r \in \bR$ define the action
\[
\si_r' = \ga_{(e^{irs_1}, \dots, e^{ir s_N})} \in \Aut(\N\T(\La)).
\]
In particular we have that
\[
\si_r(T_{\un{\la}} T_{\umu}^*) = e^{i \sca{\ell(\un{\la}) - \ell(\umu), \un{s}} r} T_{\un{\la}} T_{\umu}^*
\qfor
\sca{\ell(\un{\la}) - \ell(\umu), \un{s}} := \sum_{i=1}^N (|\la_i| - |\mu_i|) s_i.
\]
Note that if $s_1 = \cdots = s_N = 1$ then we obtain
\[
\sca{\ell(\un{\la}) - \ell(\umu), \un{s}} = |\un{\la}| - |\umu|.
\]
We may thus proceed in the same way as in \cite{Kak18} (see also \cite{Kak17b}) but now we substitute every occurrence of $|\un{n}| \be$ with $\sca{\un{n}, \un{s}} \be$.
As a consequence the $\{i\}$-strong entropy of $\La$ with respect to $\si'$ is $h_\La^{s, \{i\}}/s_i$, in the sense that the series 
\[
\sum_{k \in \bZ_+} \| \sum_{\mu_i \in \La^{(i)}, |\mu_i| = k} e^{- \sca{\mu_i, \un{s}} \be} T_{\mu_i}^* T_{\mu_i} \|
=
\sum_{k \in \bZ_+} e^{-k s_i \be} \| \sum_{\mu_i \in \La^{(i)}, |\mu_i| = k} T_{\mu_i}^* T_{\mu_i} \|
\]
converges when $\be > h_{\La}^{s, \{i\}}/s_i$.
Therefore the role of the $\si'$-strong entropy is now played by
\[
\si'\text{-}h_\La^s = \max\{ h_\La^{s, \{i\}}/s_i \mid i=1, \dots, N\}.
\]
This much is true for any product system of finite rank.
Now for higher-rank graphs in particular we have that the tracial entropy is described by the numerical radii of the components it connects with.
Also notice that if $\vphi$ is a $(\si',\be)$-KMS state then
\[
\vphi(T_v Q_{\Bi})
= \vphi(T_v) - \sum_{\ell(e) = \Bi} \vphi(T_v T_{e} T_{e}^*)
= \vphi(T_v) - e^{-s_i \be} \sum_{\ell(e) = \Bi} \vphi(T_{e}^* T_v T_{e}).
\]
Hence a $\tau \in \Tr(A)$ induces an $F$-subharmonic $(\si',\be)$-KMS state if and only if
\[
\sum \big\{ e^{-\sca{\umu, \un{s}}\be} \tau(T_{\umu}^* T_{\umu}) \mid \ell(\umu) \in F \big\} < \infty
\qand
\sum_{\ell(e) = \Bi} \tau(T_{e}^* \cdot T_{e}) = e^{s_i \be} \tau(\cdot) \foral i \notin F.
\]
By proceeding as in Proposition \ref{P:F chara}, we get that if
\[
\overleftarrow{\La}(\supp \tau, F) := (H^{(1)}, \dots, H^{(N)}; \sim)
\qand
\rho(H^{(i_0)})/s_{i_0}
:=
\max\{ \rho(H^{(i)})/s_i \mid i \in F\},
\]
then $\tau$ induces an $F$-subharmonic $(\si',\be)$-KMS state if and only if
\[
\rho(H^{(i_0)}) < s_{i_0} \be
\qand
\La^{(i)} \tau = e^{s_i \be} \tau \foral i \notin F.
\]
Accordingly to Theorem \ref{T:main gi} the positive transitions occur at $\max_i \rho(H_i)/s_i$ for all sink subgraphs $H = (H^{(1)}, \dots, H^{(N)}; \sim)$ of $\La$.

%%%%%%%%%%%%%%%%%%%%%%%%%%%%%%%%

\end{document}